\newtheorem{lemma}{Lemma}
\newtheorem{theorem}[lemma]{Theorem}
\newtheorem{corollary}[lemma]{Corollary}
\newenvironment{proof}{
\hspace*{-9mm}
{ \it Proof.}}
{\hfill {$\square$}\vspace{1.5em}}
\begin{document}

\begin{center}{{\Large Separating a chart }
\vspace{10pt}
\\ 
Teruo NAGASE and Akiko SHIMA\footnote
{
The second author is partially supported by Grant-in-Aid for Scientific Research (No.23540107), Ministry of Education, Science and Culture, Japan. 
\\
2010 Mathematics Subject Classification. 
Primary 57Q45; Secondary 57Q35.\\
{\it Key Words and Phrases}. 2-knot, chart, white vertex.
}
}
\end{center}

\begin{abstract}
In this paper, we shall show a condition for that a chart
is C-move equivalent to the product of two charts, the union of two charts $\Gamma^*$ and $\Gamma^{**}$ which are contained in disks $D^*$ and $D^{**}$ with $D^*\cap D^{**}=\emptyset$.
\end{abstract}

\setcounter{section}{0}
\section{{\large Introduction}}


Charts are oriented labeled graphs in a disk which represent surface braids (see  \cite{KnottedSurfaces},\cite{BraidBook}, and see Section~\ref{s:Prel} for the precise definition of charts, see \cite[Chapter 14]{BraidBook} for the definition of surface braids).
In a chart there are three kinds of vertices; white vertices, crossings and black vertices. 
A C-move is a local modification 
of charts in a disk.
The closures of surface braids are embedded closed oriented surfaces in 4-space ${\Bbb R}^4$
 (see \cite[Chapter 23]{BraidBook} for the definition of the closures of surface braids). 
A C-move between two charts induces an ambient isotopy between the closures of the corresponding two surface braids.
In this paper, we shall show a condition for that a chart
is C-move equivalent to the product of two charts (Theorem~\ref{MainTheorem}).

We will work in the PL category or smooth category. All submanifolds are assumed to be locally flat.
In \cite{INS} and \cite{NST},
we investigated minimal charts with exactly four white vertices.
In \cite{ONS},
we showed that there is no minimal chart with exactly five vertices. 
Hasegawa proved that there exists a minimal chart with exactly
six white vertices which represents the surface braid whose
closure is ambient isotopic to a 2-twist spun trefoil \cite{H1}.
In \cite{Gambit} and \cite{NNS},
we investigated minimal charts with exactly six white vertices.
We show that there is no minimal chart with exactly seven vertices
(\cite{ChartAppl}, \cite{ChartAppII}, \cite{ChartAppIII}, \cite{ChartAppIV}, \cite{ChartAppV}). 
Thus  the next targets are minimal charts with eight or nine white vertices.
As an application of Theorem~\ref{MainTheorem},
we shall show that there are 12 kinds of types for a minimal chart with eight white vertices
(Corollary~\ref{MinimalChart8Zero} and Corollary~\ref{MinimalChart8}), 
and there are 15 kinds of types for a minimal chart with nine white vertices
(Corollary~\ref{MinimalChart9}).

Let $\Gamma$ be a chart. For each label $m$, we denote by $\Gamma_m$ the 'subgraph' of $\Gamma$ consisting of all the edges of label $m$ and their vertices.

Two charts are said to be {\it C-move equivalent} 
if there exists
a finite sequence of C-moves 
which modifies one of the two charts 
to the other.

Now we define a type of a chart:
Let $\Gamma$ be a chart, $m$ a label of $\Gamma$, and $n_1,n_2,\dots,n_p$ integers.
The chart $\Gamma$ is said to be of {\it type $(m;n_1,n_2,\dots,n_p)$}, 
or of {\it type $(n_1,n_2,\dots,n_p)$}
briefly,
if it satisfies the following three conditions:
\begin{enumerate}
\item[(i)] 
For each $i=1,2,\dots, p$, 
the chart $\Gamma$ contains 
exactly $n_{i}$ white vertices 
in $\Gamma_{m+i-1}\cap \Gamma_{m+i}$.
\item[(ii)] 
If $i<0$ or $i>p$, 
then $\Gamma_{m+i}$ does not contain any white vertices.
\item[(iii)] 
Each of the two subgraphs $\Gamma_m$ and $\Gamma_{m+p}$ contains at least one white vertex.
\end{enumerate}
Note that $n_1\ge1$ and $n_p\ge1$ by Condition (iii).

Two C-move equivalent charts $\Gamma$ and $\Gamma'$ are said to be {\it same C-type} provided that
the types of the two charts are same.

For a subset $X$ of a chart, let
\begin{enumerate}
\item[] 
$w(X)=$ the number of white vertices of the chart contained in $X$. 
\end{enumerate}

A chart $\Gamma$ is {\it zero at label $k$} 
provided that
\begin{enumerate}
\item[(i)] 
$\Gamma_k\cap\Gamma_{k+1}=\emptyset$,
\item[(ii)] 
there exists a label $i$ with 
$i\leq k$ and $w(\Gamma_i)\neq 0$, and
\item[(iii)] 
there exists a label $j$ with 
$k<j$ and $w(\Gamma_j)\neq 0$.
\end{enumerate}

Let $\Gamma$ be an $n$-chart, and 
$D_1,D_2$ disjoint disks 
with 
$D_i\cap\Gamma\neq\emptyset$ for $i=1,2$,
$\partial D_i\cap\Gamma=\emptyset$ for $i=1,2$, 
and
$D_1\cup D_2\supset\Gamma$.
Then we can consider 
$\Gamma^*=D_1\cap\Gamma$ and
$\Gamma^{**}=D_2\cap\Gamma$ as $n$-charts.
Then we call the chart $\Gamma$ the 
{\it product of the two charts 
$\Gamma^*$ and $\Gamma^{**}$}.

A chart $\Gamma$ is {\it separable at label $k$} 
if there exist subcharts $\Gamma^*$, $\Gamma^{**}$ such that
\begin{enumerate}
\item[(i)] $\Gamma$ is the product of the two charts $\Gamma^*$ and $\Gamma^{**}$,
\item[(ii)] $w(\Gamma^{*})\neq 0$ and
$w(\Gamma^{**})\neq 0$,
\item[(iii)] 
$w(\Gamma_i^{*})=0$ 
for all label $i$ with 
$k<i$, and
\item[(iv)] 
$w(\Gamma_i^{**})=0$ 
for all label $i$ with 
$i\leq k$.
\end{enumerate}

The following is our main theorem:
\begin{theorem}
\label{MainTheorem}
A chart $\Gamma$ is zero at label $k$ 
if and only if there exists a chart $\Gamma'$
with the same C-type of $\Gamma$ such that 
$\Gamma'$ is separable at label $k$.
\end{theorem}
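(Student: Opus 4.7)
The plan is to prove the two directions separately. The backward implication is essentially bookkeeping, while the forward implication carries the main content.

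For the backward direction, suppose $\Gamma$ is C-move equivalent to a chart $\Gamma'$ of the same C-type, and $\Gamma'$ is separable at $k$ via subcharts $\Gamma^*, \Gamma^{**}$ in disjoint disks. Any white vertex of $\Gamma'$ of labels $\{k, k+1\}$ would lie in either $\Gamma^*$ (contradicting $w(\Gamma^*_{k+1})=0$ from condition (iii) of separability) or in $\Gamma^{**}$ (contradicting $w(\Gamma^{**}_k)=0$ from condition (iv)). Since $\Gamma'_k \cap \Gamma'_{k+1}$ consists only of such white vertices (crossings require labels differing by at least two, and each black vertex has only one label), this intersection is empty. Conditions (ii) and (iii) of zero-at-$k$ for $\Gamma'$ follow from $w(\Gamma^*)\neq 0$ and $w(\Gamma^{**})\neq 0$. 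Finally, zero-at-$k$ is a property depending only on the distribution of white vertices across pairs of adjacent labels, which is exactly the data encoded in the C-type, so the conclusion transfers from $\Gamma'$ to $\Gamma$.

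For the forward direction, assume $\Gamma$ is zero at $k$ and set $\Gamma^L=\bigcup_{i\le k}\Gamma_i$, $\Gamma^H=\bigcup_{j>k}\Gamma_j$. By hypothesis $\Gamma^L$ and $\Gamma^H$ share no white vertex; they share no edges since their label sets are disjoint; and any shared crossing automatically involves edge-labels differing by at least two. Hence $\Gamma^L \cap \Gamma^H$ is a finite set of crossings. The goal is to produce, by C-moves, a chart $\Gamma'$ in which $\Gamma^L$ and $\Gamma^H$ lie in disjoint sub-disks of the ambient disk $D$. The strategy is to construct a disk $D^{**}\subset D$ containing $\Gamma^H$ and avoiding every white vertex of $\Gamma^L$: take a small regular neighborhood of $\Gamma^H$, enlarge it by arcs to make it simply connected, and arrange it to miss the finitely many white vertices of $\Gamma^L$. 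Each arc of $\Gamma^L \cap D^{**}$ is then a properly embedded edge-piece of some label $\leq k$, meeting $\Gamma^H$ only transversely at crossings. These arcs will be pushed out of $D^{**}$ one at a time, always choosing an outermost subarc cobounding with an arc of $\partial D^{**}$ an innermost disk $\Delta\subset D^{**}$ whose interior meets no white vertex. Because all edge-crossings inside $\Delta$ occur between labels differing by at least two, the local configuration is amenable to the chart calculus (CI and CII moves), which permits sliding the $\Gamma^L$-arc across $\partial D^{**}$ and eliminating the resulting bigons without altering the white-vertex distribution.

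The main obstacle is making this iteration rigorous: one must guarantee that an outermost/innermost configuration exists at each step, that $\#(\Gamma^L\cap D^{**})$ strictly decreases after each push-off, and that the moves performed inside $\Delta$ neither create new white vertices nor change the labels of existing ones — so that the C-type is preserved throughout. Some care is needed where an arc of $\Gamma^L$ passes very close to $\Gamma^H$ (the neighborhood $D^{**}$ may have to be thickened or re-shaped before the push-off is applied), and one must track how crossings inside $D^{**}$ migrate under each move. Once the induction is verified and terminates with $\Gamma^L\cap D^{**}=\emptyset$, choosing a sub-disk $D^*\subset \overline{D\setminus D^{**}}$ containing $\Gamma^L$ yields the desired separable chart $\Gamma'$, completing the proof.
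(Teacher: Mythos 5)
Your backward direction is fine (and is more detailed than the paper, which dismisses it as clear). The forward direction, however, has a genuine gap, and it sits exactly at the point you flag as ``the main obstacle'': the claim that each outermost arc of $\Gamma^L\cap D^{**}$ cobounds with part of $\partial D^{**}$ a disk $\Delta$ whose interior meets no white vertex, and that the push-off across $\Delta$ is ``amenable to the chart calculus.'' This fails for arcs of label exactly $k$. An edge of label $k$ and an edge of label $k+1$ can never meet at a crossing (condition (iv) of a chart requires $|i-j|>1$), so no sequence of local C-moves can slide an arc of label $k$ past a white vertex of $\Gamma_{k+1}\cap\Gamma_{k+2}$ or past a black vertex of $\Gamma_{k+1}$. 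Now suppose a closed curve $C\subset\Gamma_k$ separates the plane into two regions each containing a white vertex of $\bigcup_{i>k}\Gamma_i$ (this is perfectly compatible with $\Gamma$ being zero at $k$, e.g.\ when $C$ carries white vertices of $\Gamma_{k-1}\cap\Gamma_k$). Any disk $D^{**}\supset\Gamma^H$ must then meet $C$, and your iteration must terminate with $C\cap D^{**}=\emptyset$, i.e.\ with both white vertices on the same side of $C$ --- so at some stage a label-$k$ arc has to be pushed across a high-label white vertex, which is impossible. No choice of $D^{**}$, thickening, or re-shaping avoids this; the obstruction is the distribution of ${\Bbb{BW}}(\bigcup_{i>k}\Gamma_i)$ among the complementary domains of $\Gamma_k$, not the geometry of the neighborhood.

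This is precisely the difficulty the paper's machinery is built to overcome, and the missing idea is that one must be allowed to \emph{enlarge} $\Gamma_k$ rather than keep it fixed. The paper encloses the offending high-label vertices in ``movable disks,'' and transports such a disk across an edge of $\Gamma_k$ by C-I-R2 and C-I-M2 moves at the cost of creating a new \emph{ring} of label $k$ encircling the transported disk (so the new chart $\Gamma^*$ satisfies $\Gamma^*_k\supsetneq\Gamma_k$, which is why the relation $\overset{k}{\sim}$ only demands $\Gamma_k\subset\Gamma_k^*$). A well-ordering argument over the dual tree of the complementary domains of $\Gamma_k$ (the $\omega_k$-minimality of Section~5) then shows all high-label black and white vertices can be herded into the unbounded domain, after which a surgery/innermost-arc argument very close in spirit to your push-off (Lemmas~\ref{LemmaReducible(D^+,J)-arc} and~\ref{Lemma(D,J)-arcFree}) finishes the separation. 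Your proposal contains a correct version of that final step but is missing the ring-creating transport and the induction that makes it available, so as written the induction you describe does not terminate.
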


A chart $\Gamma$ is 
{\it minimal} if
it possesses the smallest number of white vertices
among the charts 
C-move equivalent to 
the chart $\Gamma$ (cf. \cite{BraidThree}).

\begin{corollary}
\label{MinimalChart8Zero}
Let $\Gamma$ be a minimal chart with $w(\Gamma)=8$.
If $\Gamma$ is zero at some label,
then $\Gamma$ is C-move equivalent to one of the following charts:
\begin{enumerate}
\item[{\rm (a)}] the product of two charts of type $(4)$.
\item[{\rm (b)}] the product of two charts of type $(2,2)$.
\item[{\rm (c)}] the product of a chart of type $(4)$ and a chart of type $(2,2)$.
\end{enumerate}
\end{corollary}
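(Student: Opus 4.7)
The plan is to apply Theorem~\ref{MainTheorem} to reduce the problem to understanding minimal subcharts on each side of a separating decomposition, and then to invoke the known classification of minimal charts with a small number of white vertices.

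First, since $\Gamma$ is zero at some label $k$, Theorem~\ref{MainTheorem} gives a chart $\Gamma'$ C-move equivalent to $\Gamma$ (and of the same C-type) which is separable at label $k$. Write $\Gamma' = \Gamma^* \cup \Gamma^{**}$, where $\Gamma^*$ and $\Gamma^{**}$ lie in disjoint disks $D^*,D^{**}$, each containing at least one white vertex. Because $\Gamma$ is minimal and the two pieces are supported in disjoint disks, any C-move reducing $w(\Gamma^*)$ would lift to a C-move on $\Gamma'$ reducing $w(\Gamma')=w(\Gamma)=8$. Hence both $\Gamma^*$ and $\Gamma^{**}$ are themselves minimal charts, and $w(\Gamma^*)+w(\Gamma^{**})=8$.

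Next I would invoke the nonexistence results quoted in the introduction: there is no minimal chart with fewer than four white vertices (classical), none with five, and none with seven. Combined with $w(\Gamma^*), w(\Gamma^{**})\ge 1$ and their sum being $8$, the only possibility is
\[
w(\Gamma^*)=w(\Gamma^{**})=4.
\]

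Now I would apply the classification of minimal charts with exactly four white vertices from \cite{INS} and \cite{NST}: every such chart is of type $(4)$ or of type $(2,2)$. Applying this to each of $\Gamma^*$ and $\Gamma^{**}$ yields the three combinations $(4)+(4)$, $(2,2)+(2,2)$, and $(4)+(2,2)$, giving cases (a), (b), (c) respectively. The main obstacle is purely bookkeeping: one needs to check that assembling the separable decomposition correctly realizes $\Gamma$ (up to C-move equivalence) as the product of the two pieces in the sense of the definition preceding Theorem~\ref{MainTheorem}; but this is precisely guaranteed by the theorem. Everything else is an appeal to the cited classifications.
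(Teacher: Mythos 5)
Your proposal is correct and follows essentially the same route as the paper: apply Theorem~\ref{MainTheorem}, observe that both factors must be minimal (else a C-move would reduce $w(\Gamma')=8$), use the nonexistence of minimal charts with at most three white vertices to force $w(\Gamma^*)=w(\Gamma^{**})=4$, and then identify the possible types. The only cosmetic difference is the last step, where you cite the classification of four-white-vertex minimal charts from \cite{INS},\cite{NST} while the paper deduces the types $(4)$ and $(2,2)$ from Lemma~\ref{N1NP}(a); both are valid appeals to prior results.
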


\begin{corollary}
\label{MinimalChart8}
Let $\Gamma$ be a minimal $n$-chart with $w(\Gamma)=8$ such that
$\Gamma$ is not zero at any label.
If necessary
we change all the edges of label $k$ to 
ones of label $n-k$
for each $k=1,2,\cdots,n-1$
simultaneously,
then the type of $\Gamma$ is
$(8)$, $(6,2)$, $(5,3)$, $(4,4)$, 
$(4,2,2)$, $(3,3,2)$, $(3,2,3)$, $(2,4,2)$ 
or $(2,2,2,2)$.
\end{corollary}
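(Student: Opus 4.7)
The plan is to reduce the statement to a combinatorial enumeration of compositions of $8$. Writing the type of $\Gamma$ as $(m;n_1,\dots,n_p)$, Conditions~(i) and~(iii) in the definition of type immediately give $n_1,n_p\ge 1$ and $\sum_{i=1}^{p} n_i=w(\Gamma)=8$. I would first show that every interior entry $n_j$ (with $1<j<p$) also satisfies $n_j\ge 1$: if $n_j=0$ then $\Gamma_{m+j-1}\cap\Gamma_{m+j}=\emptyset$, while $\Gamma_m$ and $\Gamma_{m+p}$ both contain white vertices since $n_1,n_p\ge 1$, so $\Gamma$ would be zero at label $m+j-1$, contrary to hypothesis.

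The core of the argument is to upgrade this to $n_i\ge 2$ for every $i$. For $i=1$ and $i=p$ one has $w(\Gamma_m)=n_1$ and $w(\Gamma_{m+p})=n_p$, and the standard fact from the authors' previous work in the series (see e.g.\ \cite{INS,NST}) that no minimal chart contains a subgraph $\Gamma_k$ with exactly one white vertex forces $n_1,n_p\ge 2$. For interior $i$ I would invoke the analogous known lemma that, in a minimal chart, the intersection $\Gamma_k\cap\Gamma_{k+1}$ cannot contain exactly one white vertex (equivalently, any such single white vertex would admit a C-move reduction strictly decreasing $w(\Gamma)$). This interior statement is the main obstacle and is where the proof must lean on the detailed local moves developed in the earlier papers in this series.

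Granting $n_i\ge 2$ for all $i$ with $\sum n_i=8$, a direct enumeration by length $p$ yields exactly $13$ compositions: $p=1$ gives $(8)$; $p=2$ gives $(2,6),(3,5),(4,4),(5,3),(6,2)$; $p=3$ gives $(2,2,4),(2,3,3),(2,4,2),(3,2,3),(3,3,2),(4,2,2)$; $p=4$ gives only $(2,2,2,2)$; and $p\ge 5$ is impossible since $2p>8$. The label-reversal $k\mapsto n-k$ converts a chart of type $(m;n_1,\dots,n_p)$ into one of type $(n-m-p;n_p,\dots,n_1)$ and hence reverses the composition. Partitioning the $13$ compositions accordingly produces five palindromes $(8),(4,4),(2,4,2),(3,2,3),(2,2,2,2)$ together with four reversal pairs $\{(2,6),(6,2)\}$, $\{(3,5),(5,3)\}$, $\{(2,2,4),(4,2,2)\}$, $\{(2,3,3),(3,3,2)\}$, giving a total of $9$ equivalence classes, which match exactly the nine types listed in the corollary.
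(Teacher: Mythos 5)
Your reduction to enumerating compositions of $8$ has the right general shape, and your first step (every $n_i\ge 1$, since an interior $n_j=0$ would make $\Gamma$ zero at label $m+j-1$) agrees with the paper. The genuine gap is your key claim that every \emph{interior} entry satisfies $n_i\ge 2$, which you justify by appeal to ``the analogous known lemma that, in a minimal chart, the intersection $\Gamma_k\cap\Gamma_{k+1}$ cannot contain exactly one white vertex.'' No such lemma exists in this paper or in the cited series, and the paper's own Corollary~\ref{MinimalChart9} makes clear that the authors cannot exclude interior entries equal to $1$: its list for nine white vertices contains $(4,1,4)$, $(4,1,2,2)$ and $(2,2,1,2,2)$. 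If your interior lemma were available, those types would have been struck from that list as well. So the step on which your entire enumeration rests is unsupported, and you yourself flag it as ``the main obstacle'' without supplying an argument.

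What the paper actually uses is the much weaker Lemma~\ref{N1NP}: $n_1,n_p\ge 2$ always, and $n_2\ge 2$ (resp.\ $n_{p-1}\ge 2$) only under the additional hypothesis $n_1\in\{2,3\}$ (resp.\ $n_p\in\{2,3\}$). One then checks, case by case in $p$ and using $\sum n_i=8$ with $n_i\ge 1$, that these end-constraints already force the list: for $p=3$ one cannot have $n_1\ge n_3\ge 4$ (the sum would be at least $9$), so $n_3\in\{2,3\}$ and hence $n_2\ge 2$; for $p=4$ both ends must lie in $\{2,3\}$, forcing $n_2,n_3\ge 2$ and hence $(2,2,2,2)$; for $p\ge 5$ the same bounds give $w(\Gamma)\ge 9$. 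It is only this interplay between the conditional end-constraints and the total $8$ that eliminates interior $1$'s here --- they are not eliminated a priori. Your final count of nine reversal-classes coincides with the paper's list, but that is an artifact of the $w=8$ arithmetic rather than a consequence of your argument; to repair the proof, replace your interior lemma by Lemma~\ref{N1NP} and redo the case analysis along the lines above.
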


\begin{corollary}
\label{Corollary1}
If $\Gamma$ is a minimal chart 
with $w(\Gamma)=9$ or $11$,
then the chart is not zero at any label. 
Namely 
if the type of the chart is $(m;n_1,n_2,\cdots ,n_p)$, 
then for each $i=1,2,\cdots ,p$, we have $n_i\neq 0$.
\end{corollary}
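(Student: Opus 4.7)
The plan is to argue by contradiction with the help of Theorem~\ref{MainTheorem}. I would begin by assuming that $\Gamma$ is a minimal chart with $w(\Gamma)\in\{9,11\}$ which is zero at some label $k$. Theorem~\ref{MainTheorem} then provides a chart $\Gamma'$, C-move equivalent to $\Gamma$ and of the same type, that is separable at $k$: $\Gamma'=\Gamma^{*}\cup\Gamma^{**}$ with $\Gamma^{*}\subset D^{*}$ and $\Gamma^{**}\subset D^{**}$ living in disjoint disks, and with $w(\Gamma^{*}),w(\Gamma^{**})\geq 1$.

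My first step would be to note that the type of a chart determines its total white vertex count, since each white vertex belongs to a unique intersection $\Gamma_a\cap\Gamma_{a+1}$ (crossings involve non-adjacent labels, and black vertices are degree-one). Thus $w(\Gamma')=w(\Gamma)$, and because $\Gamma'$ is C-move equivalent to the minimal $\Gamma$ with the same white vertex count, $\Gamma'$ is itself minimal. I would then observe that since $D^{*}$ and $D^{**}$ are disjoint, any C-move sequence performed inside one disk is a legitimate C-move sequence on $\Gamma'$ affecting only the corresponding subchart, so both $\Gamma^{*}$ and $\Gamma^{**}$ must themselves be minimal.

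The decisive step is a numerical obstruction. Combining the results cited in the Introduction (no minimal chart has $5$ or $7$ white vertices, by \cite{ONS} and \cite{ChartAppl}--\cite{ChartAppV}) with the standard fact from Kamada's foundational work that no minimal chart has $1$, $2$, or $3$ white vertices, every minimal chart with $w\geq 1$ satisfies $w\in\{4,6\}\cup\{n:n\geq 8\}$, that is, $w\notin\{1,2,3,5,7\}$. Writing $w(\Gamma)=w(\Gamma^{*})+w(\Gamma^{**})$ with both summands at least one, I would then run the finite check: for $9$, the partitions $1+8,\,2+7,\,3+6,\,4+5$ each contain a summand in the forbidden set $\{1,2,3,5,7\}$; for $11$, the partitions $1+10,\,2+9,\,3+8,\,4+7,\,5+6$ each likewise contain a forbidden summand. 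Either way at least one of $\Gamma^{*},\Gamma^{**}$ is a minimal chart with a forbidden number of white vertices, contradiction.

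The principal obstacle I expect is assembling the exclusion list cleanly. The exclusions $w\neq 5,7$ are immediate from papers already cited, but the exclusions $w\neq 1,2,3$ must be invoked by an explicit reference (Kamada's monograph, or an earlier Nagase--Shima paper), or supplied by a brief direct argument exploiting the degree-six structure of white vertices together with the chart axioms. Once this exclusion lemma is in hand, the partition check above finishes the proof with essentially no further work.
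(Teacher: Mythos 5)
Your proposal is correct and follows essentially the same route as the paper: invoke Theorem~\ref{MainTheorem} to split a separable chart into two minimal factors whose white-vertex counts sum to $9$ or $11$, then rule out every partition using the non-existence of minimal charts with $1,2,3,5$ or $7$ white vertices (the paper likewise asserts the exclusion of $1,2,3$ without a separate citation). The only cosmetic difference is that the paper packages the $w=9$ case as a general parity statement (Corollary~\ref{Corollary2} on ``first class'' charts) while you run the explicit partition check for both values, which is the same argument.
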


\begin{corollary}
\label{MinimalChart9}
Let $\Gamma$ be a minimal $n$-chart with $w(\Gamma)=9$.
If necessary
we change all the edges of label $k$ to 
ones of label $n-k$
for each $k=1,2,\cdots,n-1$
simultaneously,
then the type of $\Gamma$ is
$(9)$, $(7,2)$, $(6,3)$, $(5,4)$,
 $(5,2,2)$, $(4,3,2)$, $(4,2,3)$, $(4,1,4)$, 
 $(3,4,2)$, $(3,3,3)$, $(2,5,2)$, 
 $(4,1,2,2)$, $(3,2,2,2)$, $(2,3,2,2)$ 
 or $(2,2,1,2,2)$.
\end{corollary}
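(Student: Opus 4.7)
The plan is to reduce the classification to a combinatorial enumeration and then run a case split on the length $p$. By Corollary~\ref{Corollary1} applied to $w(\Gamma)=9$, the chart $\Gamma$ is not zero at any label, so in its type $(m;n_1,\dots,n_p)$ every $n_i$ is strictly positive; together with $n_1+\cdots+n_p=9$ and the label-reversal symmetry $(n_1,\dots,n_p)\sim(n_p,\dots,n_1)$ permitted by the statement, this reduces the problem to choosing, up to reversal, a composition of $9$ into positive parts.

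There are many more such compositions than $15$, so the substantive step is to establish two further minimality restrictions: (i) the boundary parts satisfy $n_1\geq 2$ and $n_p\geq 2$; and (ii) an internal part $n_i=1$ with $1<i<p$ forces the balance $\sum_{j<i}n_j=\sum_{j>i}n_j$. Both would be derived by adapting the structural analysis that drives Corollary~\ref{MinimalChart8}, which itself rests on the authors' earlier classification of minimal charts with at most seven white vertices together with Theorem~\ref{MainTheorem}. For $w(\Gamma)=9$, condition~(ii) forces each side of an internal singleton to equal $4$, which is precisely why $(4,1,4)$, $(4,1,2,2)$, and $(2,2,1,2,2)$ survive whereas $(5,1,3),(6,1,2),(7,1,1),\ldots$ do not.

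Granted (i) and (ii), the enumeration is routine: $p=1$ gives $(9)$; $p=2$ with $n_1\geq n_p\geq 2$ gives $(7,2),(6,3),(5,4)$; $p=3$ gives $(5,2,2),(4,3,2),(4,2,3),(4,1,4),(3,4,2),(3,3,3),(2,5,2)$; $p=4$ gives $(4,1,2,2),(3,2,2,2),(2,3,2,2)$; $p=5$ leaves only the palindrome $(2,2,1,2,2)$; and $p\geq 6$ is ruled out because the minimum $2+1+\cdots+1+2$ already produces several internal $1$'s that cannot all sit at the balance point. The counts $1+3+7+3+1=15$ reproduce the statement. The main obstacle is condition~(ii): proving that an internal singleton in a minimal $9$-chart can occur only in a balanced composition is the genuine structural content that distinguishes the $w=9$ table (three ``$1$'' entries) from the $w=8$ table (no internal $1$'s), and that is where the real work lies.
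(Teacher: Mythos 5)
There is a genuine gap, and you have located it yourself: your condition~(ii) --- that an internal part $n_i=1$ forces $\sum_{j<i}n_j=\sum_{j>i}n_j$ --- is exactly the load-bearing claim, and you offer no proof of it beyond saying it ``would be derived by adapting the structural analysis.'' It is not what the paper proves or cites. The paper's actual tool is Lemma~\ref{N1NP}, quoted from the authors' earlier work: (a) $n_1>1$ and $n_p>1$ (this is your condition~(i)), and (b),(c) if $n_1\in\{2,3\}$ then $n_2\ge 2$, and symmetrically if $n_p\in\{2,3\}$ then $n_{p-1}\ge 2$. Combined with $n_i\ge 1$ from Corollary~\ref{Corollary1}, these end-behavior constraints alone drive the case split on $p$: for instance $(6,1,2)$ and $(4,2,1,2)$ die because $n_p=2$ forces $n_{p-1}\ge 2$, and $(5,1,3)$ dies because $n_p=3$ forces $n_{p-1}\ge 2$, while $(4,1,4)$ survives because neither hypothesis applies when the end parts are $\ge 4$. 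Your balance condition happens to produce the same exclusions when the total is $9$, but it is a logically different and in places strictly stronger statement --- for example it would forbid a type $(5,1,4)$ for a $10$-white-vertex chart, which nothing in the paper rules out --- so it cannot simply be ``read off'' from the machinery behind Corollary~\ref{MinimalChart8}, which also only uses Lemma~\ref{N1NP}.

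Your enumeration itself is sound: granting~(i) and~(ii), the count $1+3+7+3+1=15$ and the argument that $p\ge 6$ is impossible (two balanced internal singletons cannot coexist, and one singleton leaves a sum $\ge 2p-1>9$) are correct, and they reproduce the paper's list. So the proposal is a valid reduction to an unproved structural claim, not a proof. To repair it along the paper's lines, replace condition~(ii) by Lemma~\ref{N1NP}(b),(c) and redo the exclusions: in each case with $n_p\in\{2,3\}$ use $n_{p-1}\ge 2$, in each case with $n_1\in\{2,3\}$ use $n_2\ge 2$, and handle $n_1\ge 4$ or $n_p\ge 4$ by direct counting against the total $9$.
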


The paper is organized as follows.
In Section~\ref{s:Prel},
we define charts.
In Section~\ref{s:SeparatingSystems} and Section~\ref{s:MovableDisk},
we show lemmata to need a proof of Theorem~\ref{MainTheorem}.
In Section~\ref{s:OmegaMinimal},
we define $\omega_k$-minimal charts.
By using $\omega_k$-minimal charts,
we show that if a chart $\Gamma$ is zero at label $k$, 
then there exists a chart $\Gamma^*$ obtained from $\Gamma$ by C-I-R2 moves and C-I-M2 moves
such that $\Gamma_k^*\supset \Gamma_k$
and all of black vertices and white vertices in $\bigcup^{\infty}_{i=k+1}\Gamma_i^*$ are contained in the same complementary domain of $\Gamma_k$.
In Section~\ref{s:ProofMainTheorem},
we give a proof of Theorem~\ref{MainTheorem}
and proofs of corollaries.

\section{{\large Preliminaries}}
\label{s:Prel}

Let $n$ be a positive integer. An {\it $n$-chart} is an oriented labeled graph in a disk,
which may be empty or have closed edges without vertices, called {\it hoops},
satisfying the following four conditions:
\begin{enumerate}
	\item[(i)] Every vertex has degree $1$, $4$, or $6$.
	\item[(ii)] The labels of edges are in $\{1,2,\dots,n-1\}$.
	\item[(iii)] In a small neighborhood of each vertex of degree $6$,
	there are six short arcs, three consecutive arcs are oriented inward and the other three are outward, and these six are labeled $i$ and $i+1$ alternately for some $i$,
	where the orientation and the label of each arc are inherited from the edge containing the arc.
	\item[(iv)] For each vertex of degree $4$, diagonal edges have the same label and are oriented coherently, and the labels $i$ and $j$ of the diagonals satisfy $|i-j|>1$.
\end{enumerate}
We call a vertex of degree $1$ a {\it black vertex,} a vertex of degree $4$ a {\it crossing}, and a vertex of degree $6$ a {\it white vertex} respectively (see Fig.~\ref{Vertices}).

\begin{figure}[t]
\centerline{\includegraphics{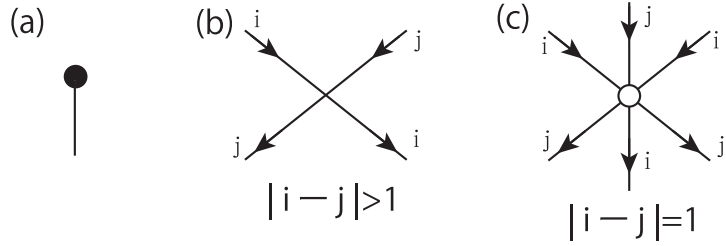}}
\vspace{5mm}
\caption{\label{Vertices}(a) A black vertex. (b) A crossing. (c) A white vertex.}
\end{figure}

Now {\it $C$-moves} are local modifications 
of charts in a disk
as shown in 
Fig.~\ref{Cmove}
(see \cite{KnottedSurfaces}, 
\cite{BraidBook}, \cite{Tanaka} 
for the precise definition).
These C-moves as shown in 
Fig.~\ref{Cmove} 
are examples of C-moves.
We often use
C-I-M2 moves, and C-I-R2 moves in this paper.


\begin{figure}[t]
\centerline{\includegraphics{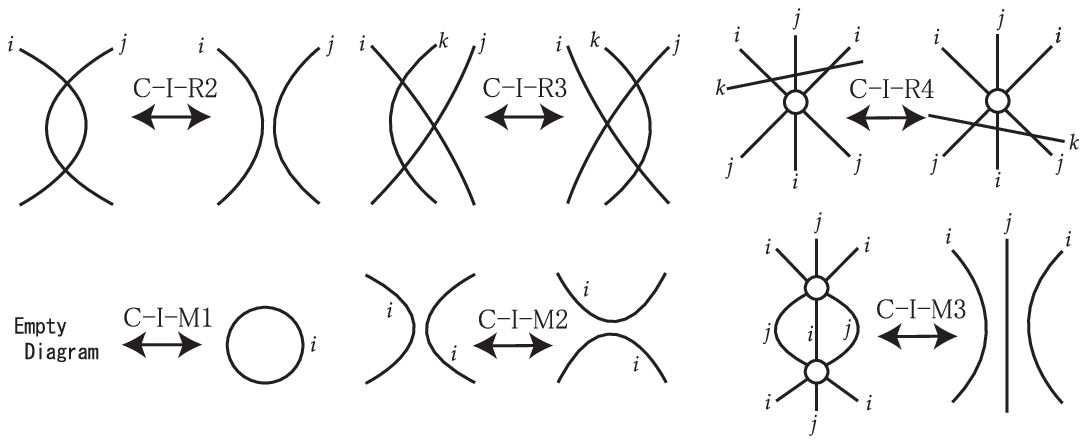}}
\vspace{5mm}
\caption{\label{Cmove}}
\end{figure}

Let $\Gamma$ be a chart,
and $m$ a label of $\Gamma$.
The 'subgraph' $\Gamma_m$ of $\Gamma$ consists of all the edges of label $m$ and their vertices.
An edge of $\Gamma$ is the closure of a connected component of the set obtained by taking out all white vertices and crossings from $\Gamma$.
On the other hand, we assume that 
\begin{enumerate}
	\item[] an {\it edge} of $\Gamma_m$ is the closure of a connected component of the set obtained by taking out all white vertices from $\Gamma_m$.
\end{enumerate}
Thus 
any vertex of $\Gamma_m$ is a black vertex or a white vertex.
Hence any crossing of $\Gamma$ is not considered as a vertex of $\Gamma_m$.

In this paper
for a set $X$ in a space
we denote 
the interior of $X$,
the boundary of $X$ and
the closure of $X$
by Int$X$, $\partial X$
and $Cl(X)$
respectively.


\section{Separating systems}
\label{s:SeparatingSystems}

In this paper we assume that 
{\it every chart is in the plane}.

A disk $D$ is {\it in general position} 
with respect to a chart $\Gamma$ 
provided that
\begin{enumerate}
\item[(i)] $\partial D$ does not contain any white vertices, black vertices, nor crossings of $\Gamma$, and
\item[(ii)] if an edge of $\Gamma$
intersects $\partial D$,
then the edge intersects $\partial D$ transversely.
\end{enumerate}

Let $D$ be a disk.
The simple arc $\ell$ is called a {\it proper arc} of $D$
provided that $\ell\cap\partial D=\partial\ell$.
Let $L$ be a simple arc on $\partial D$. 
A proper arc $\ell$ of $D$ is called a {\it $(D,L)$-arc} 
provided that $\partial \ell\subset L$.

Let $\Gamma$ be a chart, and
$k$ a label of $\Gamma$.
A simple arc in an edge of $\Gamma_k$
is called an {\it arc of label $k$}.

Let $D$ be a disk in general position with respect to a chart $\Gamma$, and
$L$ a simple arc on $\partial D$.
A $(D,L)$-arc $\ell$ of label $k$ is said to be {\it reducible}
if for the subarc $L'$ of $L$ with $\partial L'=\partial \ell$ 
we have 
Int$L'\cap (\Gamma_{k-1}\cup\Gamma_{k}\cup\Gamma_{k+1})=\emptyset$.

For a subset $X$ of a chart $\Gamma$, let
\begin{center}
${\Bbb {BW}}(X)=$ the set of all the black vertices and white vertices of $\Gamma$ in $X$.
\end{center}

Let $k$ be a positive integer.
Let $D^-$ and $D^+$ be disks 
in general position 
with respect to a chart $\Gamma$ 
such that 
$J=D^-\cap D^+$ is an arc. 
The triplet $(D^-,D^+,J)$ is called a 
{\it separating system at label $k$} 
for the chart $\Gamma$ 
provided that
\begin{enumerate}
\item[(i)] 
$D^-\cap {\Bbb {BW}}(\bigcup_{i=k+1}^\infty\Gamma_i)=\emptyset$,
\item[(ii)] 
$D^+\cap {\Bbb {BW}}(\bigcup_{i=1}^k\Gamma_i)=\emptyset$,
\item[(iii)] 
$J\cap \Gamma_k=\emptyset$, and
\item[(iv)] 
$Cl(\partial D^+-J)\cap \Gamma=\emptyset$.
\end{enumerate}

\begin{lemma}
\label{LemmaReducible(D^+,J)-arc}
Let $(D^-,D^+,J)$ be a separating system at label $k$ 
for a chart $\Gamma$.
Then the following hold:
\begin{enumerate}
\item[{\rm (a)}] If there exists 
a $(D^+,J)$-arc of label less than $k$, 
then there exists a 
reducible $(D^+,J)$-arc of label less than $k$.
\item[{\rm (b)}] If there exists 
a $(D^-,J)$-arc of label greater than $k$, 
then there exists a 
reducible $(D^-,J)$-arc of label greater than $k$.
\end{enumerate}
\end{lemma}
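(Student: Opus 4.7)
My plan is to prove (a) and (b) by the same innermost-disk argument; the two statements are symmetric under interchanging $(D^+, \text{labels } <k)$ with $(D^-, \text{labels } >k)$, so I focus on (a) and describe how (b) follows.

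Suppose there exists a $(D^+,J)$-arc of label less than $k$. For any such arc $\ell$ of label $k'$, the subarc $L'$ of $J$ with $\partial L'=\partial\ell$ together with $\ell$ bounds a subdisk $E$ of $D^+$. Among all $(D^+,J)$-arcs of label $<k$ I choose one whose associated disk $E$ is innermost, in the sense that no other $(D^+,J)$-arc of label $<k$ is contained in $E$; such a choice exists because $\Gamma$ is finite. The claim is that this $\ell$ is reducible.

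Assume for contradiction that $\ell$ is not reducible. Then some point $p\in\mathrm{Int}\,L'$ lies on $\Gamma_j$ for some $j\in\{k'-1,k',k'+1\}$. Condition (iii), $J\cap\Gamma_k=\emptyset$, rules out $j=k$, and since $k'<k$ this gives $j<k$. Let $e$ be the edge of $\Gamma_j$ through $p$. By condition (ii), $D^+$ contains no black or white vertex of $\Gamma_j$, so the component $\ell'$ of $e\cap D^+$ through $p$ is a simple arc, and by condition (iv) its other endpoint also lies on $J$. The crucial step is to show $\ell'\subset E$: following $\ell'$ from $p$ into $E$, it cannot exit through $\ell$. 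Indeed, any intersection point of $\ell$ and $\ell'$ would have to be either a crossing, which requires label difference $>1$, while $|k'-j|\le 1$, or a white vertex of $\Gamma$ with labels $\{k',j\}$, which would have a label $\le k$, contradicting condition (ii) (every white vertex lying in $D^+$ must have both labels $>k$). Consequently $\ell'$ exits $E$ through $L'$, yielding a $(D^+,J)$-arc of label $j<k$ with endpoints in $\mathrm{Int}\,L'$; the strictly smaller subdisk $E'\subsetneq E$ that it cuts off contradicts the innermost choice of $E$. Hence $\ell$ is reducible.

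For part (b), exactly the same argument is applied to $D^-$ using ``label $>k$'' in place of ``label $<k$'' and condition (i) in place of (ii); again $j=k$ is excluded by (iii). The subdisk $E\subset D^-$ cut off by a $(D^-,J)$-arc satisfies $\partial E\subset\ell\cup J$, so the fact that (iv) is stated only for $D^+$ is irrelevant inside $E$. The decisive ingredient---and the only subtle point---is the ``cannot cross $\ell$'' step, which combines the chart axiom that diagonals at a crossing have label difference $>1$ with the vertex-exclusion condition (i) or (ii), exploiting that a white vertex always joins edges of two consecutive labels.
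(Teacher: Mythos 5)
Your proof is correct and follows essentially the same strategy as the paper's: both take an extremal $(D^+,J)$-arc (you minimize the cut-off disk under inclusion, the paper minimizes the number of points of $\mathrm{Int}(P_\ell)\cap\bigcup_{i=1}^{k}\Gamma_i$) and derive a contradiction from an edge of adjacent label meeting the interior of the cut-off subarc of $J$, using exactly the same three ingredients — no black or white vertices of the relevant labels in the disk, the impossibility of a crossing between labels differing by at most $1$, and $J\cap\Gamma_k=\emptyset$ to keep the new arc's label on the correct side of $k$. The symmetric treatment of (b) likewise matches the paper.
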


\begin{proof} 
We show Statement (a).
Suppose that there exists a 
$(D^+,J)$-arc of label less than $k$.
Let $\Bbb S$ be the set of 
all $(D^+,J)$-arcs of label less than $k$.
For each $\ell\in \Bbb S$ let $P_\ell$ be 
the subarc of $J$ 
with $\partial P_\ell=\partial \ell$. 
Let
\begin{enumerate}
\item[] 
$m(\ell)=$ the number of points in 
${\rm Int}(P_\ell)\cap (\bigcup_{i=1}^k\Gamma_i).$
\end{enumerate}
Let $\ell_0$ be an element in $\Bbb S$ with
\begin{enumerate}
\item[] 
$m(\ell_0)=\min\{~m(\ell)~|~\ell\in\Bbb S~\}.$
\end{enumerate}
Let $j$ be the label of $\ell_0$.
We have $j\leq k-1$.
Let
\begin{enumerate}
\item[] $s=$ the number of points 
in ${\rm Int}(P_{\ell_0})\cap 
(\Gamma_{j-1}\cup\Gamma_{j}\cup\Gamma_{j+1})$.
\end{enumerate}
We show that $s=0$ by contradiction.
Suppose that $s>0$.
Let $D_0$ be the disk in $D^+$ bounded by $\ell_0\cup P_{\ell_0}$.
Let $\ell_1$ be a connected component of 
$D_0\cap 
(\Gamma_{j-1}\cup\Gamma_{j}\cup\Gamma_{j+1})$ 
different from $\ell_0$. 
Since $j+1\leq (k-1)+1=k$ and 
since there do not exist 
any black vertices nor white vertices of 
$\bigcup_{i=1}^k\Gamma_i$ in $D^+$
by Condition (ii) of a separating sysytem, 
the arc $\ell_1$ is a proper arc of $D_0$.
Since $\ell_0\cap \ell_1=\emptyset$, 
the arc $\ell_1$ is a $(D_0,P_{\ell_0})$-arc.
Hence $\ell_1$ is a $(D^+,J)$-arc.
Let $p$ be the label of $\ell_1$.
Now $J\cap \Gamma_k=\emptyset$ implies 
$p\neq k$. 
Hence we have $p<k$.
Thus $\ell_1\in {\Bbb S}$.
Since $P_{\ell_1}\cap \ell_0=\emptyset$, 
we have
\begin{enumerate}
\item[] 
$m(\ell_1)\leq m(\ell_0)-2$.
\end{enumerate}
This contradicts that $m(\ell_0)$ is minimal.
Hence we have $s=0$.
Therefore
\begin{enumerate}
\item[] 
${\rm Int}(P_{\ell_0})\cap 
(\Gamma_{j-1}\cup\Gamma_{j}\cup\Gamma_{j+1})=\emptyset$.
\end{enumerate}
This means that $\ell_0$ is a desired reducible $(D^+,J)$-arc.
This proves Statement (a).

Similarly we can show Statement (b).
\end{proof}

Let $\Gamma$ be a chart, 
and $e_1$ and $e_2$ edges of label $m$
(possibly $e_1=e_2$).
Let $\alpha$ be an arc such that
\begin{enumerate}
\item[(i)]
$\partial \alpha$ consists of a point in $e_1$
and a point in $e_2$,
and
\item[(ii)]
Int$(\alpha)$ transversely intersects edges of $\Gamma$
(see Fig.~\ref{FigureSurgery}(a)).
\end{enumerate}
Let $D$ be a regular neighborhood of
the arc $\alpha$.
Let $\gamma_1=e_1\cap D$ and
$\gamma_2=e_2\cap D$.
Then $\gamma_1$ and $\gamma_2$ are
proper arcs of $D$ and
they split the disk $D$ into three disks.
Let $E$ be the one of the three disks
with $E\supset \alpha$
(see Fig.~\ref{FigureSurgery}(b)). 
A chart {\it $\Gamma^\prime$ 
is obtained from $\Gamma$
by a surgery along $\alpha$}
provided that
\begin{enumerate}
\item[(iii)]
$\Gamma^\prime_m=
(\Gamma_m-(\gamma_1\cup \gamma_2))
\cup
Cl(\partial E-(\gamma_1\cup\gamma_2))$,
and
\item[(iv)]
$\Gamma'_i=\Gamma_i$ $(i\neq m)$
(see Fig.~\ref{FigureSurgery}(c)).
\end{enumerate}
\begin{figure}[h]
\centerline{\includegraphics{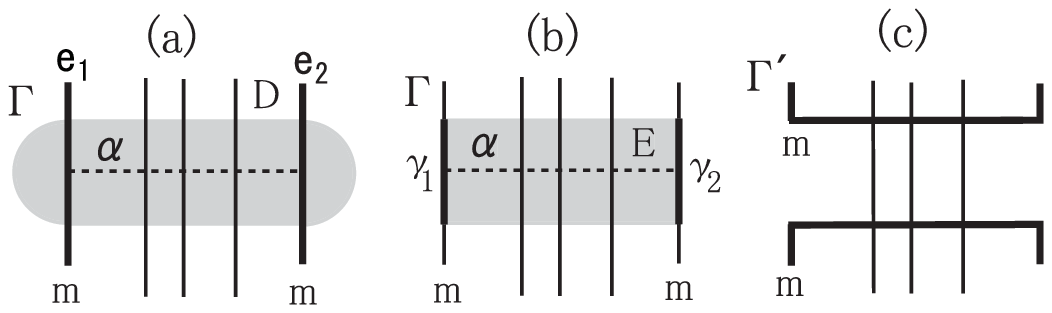}}
\caption{\label{FigureSurgery}}
\end{figure}


Let $k$ be a positive integer.
Let $\Gamma$ and $\Gamma^*$ be charts.
We write $\Gamma^*\overset{k}{\sim}\Gamma$
provided that
\begin{enumerate}
\item[(i)] 
the chart $\Gamma^*$ is obtained from $\Gamma$ 
by applying C-I-M2 moves, C-I-R2 moves and
ambient isotopies of the plane, and
\item[(ii)] 
$\Gamma_k$ is a subset of $\Gamma_k^*$.
\end{enumerate}
For a positive integer $k$ and a chart $\Gamma$,  
let
\begin{enumerate}
\item[] $Fix(\Gamma_k;\Gamma)=
\{~\Gamma^*~|~\Gamma^*\overset{k}{\sim}\Gamma~\}$.
\end{enumerate}
{\bf Remark.} The relation $\Gamma^*\overset{k}{\sim}\Gamma$
implies that $\Gamma^*$ and $\Gamma$ are same C-type.


\begin{lemma} 
\label{Lemma(D,J)-arcFree}
Let $(D^-,D^+,J)$ be a separating system 
at label $k$ 
for a chart $\Gamma$.
Then there exists a chart 
$\Gamma'\in Fix(\Gamma_k;\Gamma)$ such that
\begin{enumerate}
\item[{\rm(a)}] 
the chart $\Gamma'$ is obtained from $\Gamma$ 
by applying surgeries along subarcs of $J$, and
\item[{\rm (b)}] 
the chart $\Gamma'$ does not possess any 
$(D^-,J)$-arcs of label greater than $k$ nor
$(D^+,J)$-arcs of label less than $k$.
\end{enumerate}
\end{lemma}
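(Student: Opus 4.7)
The plan is to induct on the non-negative integer
$$
N(\Gamma)=\#\{(D^+,J)\text{-arcs of label }<k\}+\#\{(D^-,J)\text{-arcs of label }>k\}.
$$
If $N(\Gamma)=0$ take $\Gamma'=\Gamma$ (the empty list of surgeries). Otherwise, by the $D^+/D^-$-symmetry of the statement, I may assume there exists a $(D^+,J)$-arc of label less than $k$. Lemma~\ref{LemmaReducible(D^+,J)-arc}(a) then produces a reducible one, say $\ell$, of some label $j<k$; let $L'\subset J$ be the subarc with $\partial L'=\partial \ell$, so that Int$(L')\cap(\Gamma_{j-1}\cup \Gamma_j\cup \Gamma_{j+1})=\emptyset$.

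The heart of the proof is to remove $\ell$ by a surgery along $\alpha:=L'$. Separating condition (ii) keeps every black and white vertex of $\Gamma_j$ out of $D^+$, so $\ell$ is a single connected component of $D^+\cap \Gamma_j$ contained in one edge $e$ of $\Gamma_j$; in the notation of the surgery, $e_1=e_2=e$. I choose the regular neighborhood $D$ of $L'$ thin enough that $\partial D\cap \Gamma$ consists only of the four endpoints of $\gamma_1\cup \gamma_2$. Because of reducibility, every edge of $\Gamma$ meeting Int$(L')$ has label at distance at least $2$ from $j$, so pushing the $D^+$-portion of $e$ across $L'$ creates only permissible degree-$4$ crossings (chart condition (iv)). A local analysis then realizes the surgery as a finite sequence of C-I-R2 moves followed by an ambient isotopy of the plane; since $\Gamma_k$ is never altered, the resulting chart $\tilde\Gamma$ lies in $Fix(\Gamma_k;\Gamma)$.

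I next check that $(D^-,D^+,J)$ remains a separating system for $\tilde\Gamma$ (no new black or white vertices are created, and $\Gamma_k\cap J$ is unchanged) and that $N(\tilde\Gamma)<N(\Gamma)$, after which the inductive hypothesis applied to $\tilde\Gamma$ delivers the required $\Gamma'$. For the strict decrease of $N$, observe that after the surgery $\ell$ combines with the new arc $\delta_1\subset \partial D\cap D^+$ to form a hoop of label $j$ lying entirely inside $D^+$, which has no endpoints at all and therefore contributes no $(D^+,J)$-arc; the other new arc $\delta_2\subset \partial D\cap D^-$ has its endpoints strictly in the interior of $D^-$, so it is not a $(D^-,J)$-arc either. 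All edges created have label $j<k$, so no new $(D^-,J)$-arc of label greater than $k$ can appear, and $N$ drops by at least $1$.

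The main obstacle, already isolated in the middle paragraph, is the verification that the surgery is genuinely of type $\overset{k}{\sim}$: one must certify that pushing $\ell$ across $L'$ never forces a degree-$4$ crossing with an edge of label $j-1, j$, or $j+1$, which would be illegal by chart condition (iv). That certification is exactly what reducibility provides, so Lemma~\ref{LemmaReducible(D^+,J)-arc} carries most of the technical load of the proof.
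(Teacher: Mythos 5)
Your proposal is correct and follows essentially the same route as the paper: both arguments are a descent on the total count of offending $(D^{\pm},J)$-arcs, using Lemma~\ref{LemmaReducible(D^+,J)-arc} to locate a reducible arc and a surgery along the corresponding subarc of $J$ to eliminate it (the paper phrases this as choosing, among all charts obtainable by surgeries along subarcs of $J$, one minimizing the count, rather than inducting, but this is the same descent). The only nitpick is that realizing the surgery by C-moves requires a C-I-M2 move in addition to the C-I-R2 moves you mention, which is harmless since $Fix(\Gamma_k;\Gamma)$ permits both.
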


\begin{proof} 
Let $\Bbb S$ be the set of all charts obtained from $\Gamma$ 
by applying surgeries along subarcs of $J$.
Since $J\cap \Gamma_k=\emptyset$
by Condition (ii) of a separating system,
we have that $\Gamma^*_k=\Gamma_k$ for each chart $\Gamma^*\in{\Bbb S}$.
Thus for each chart $\Gamma^*\in \Bbb S$ 
we have that
$\Gamma^*\in Fix(\Gamma_k;\Gamma)$
and that 
$(D^-,D^+,J)$ is a separating system at label $k$ for $\Gamma^*$.
For each chart $\Gamma^*\in \Bbb S$, let
\begin{enumerate}
\item[] 
$
\begin{array}{lcl}
n(\Gamma^*)&=&
\text{the number of }(D^+,J)\text{-arcs of label less than }k\\
&+&
\text{the number of }(D^-,J)\text{-arcs of label greater than }k.
\end{array}
$
\end{enumerate}
Let $\Gamma'$ be a chart in $\Bbb S$ such that 
\begin{enumerate}
\item[] $n(\Gamma')=\min\{~n(\Gamma^*)~|~\Gamma^*\in \Bbb S~\}.$
\end{enumerate}
We show that $n(\Gamma')=0$ by contradiction.
Suppose that $n(\Gamma')>0$.
Then by Lemma~\ref{LemmaReducible(D^+,J)-arc} 
the chart $\Gamma'$ possesses a reducible $(D^-,J)$-arc 
of label greater than $k$ or 
a reducible $(D^+,J)$-arc
of label less than $k$, say $\ell$.
Let $P_{\ell}$ be the subarc of $J$ with $\partial P_{\ell}=\partial \ell$. 
Let $\Gamma''$ be a chart obtained from $\Gamma'$ 
by applying a surgery along the subarc $P_{\ell}$.
Then we have $\Gamma''\in\Bbb S$ and
\begin{enumerate}
\item[] $n(\Gamma'')\leq n(\Gamma')-1$.
\end{enumerate}
This contradicts that $n(\Gamma')$ is minimal.
Therefore $n(\Gamma')=0$.
The chart $\Gamma'$ is a desired chart.
\end{proof}

\section{Movable disks}
\label{s:MovableDisk}

Let $D$ be a disk in general position 
with respect to a chart $\Gamma$.
The disk $D$ is called a {\it movable disk} 
at label $k$ 
with respect to the chart $\Gamma$ 
provided that
\begin{enumerate}
\item[(i)] 
$D\cap {\Bbb{BW}} (\bigcup_{i=1}^k\Gamma_i)=\emptyset$, and
\item[(ii)]
$\partial D\cap (\bigcup_{i=1}^{k+1}\Gamma_i)=\emptyset$.
\end{enumerate}

Let $\Gamma$ be a chart,
and $m$ a label of $\Gamma$.
A {\it hoop} is a closed edge of $\Gamma$ without vertices 
(hence without crossings, neither).
A {\it ring} is a closed edge of $\Gamma_m$ containing a crossing but not containing any white vertices.


\begin{lemma}
\label{LemmaMovableDisk}
Let $(D^-,D^+,J)$ be a separating system at label $k$ 
for  a chart $\Gamma$.
If $\partial (D^-\cup D^+)\cap \Gamma_{k+1}=\emptyset$, 
then there exists a chart 
$\Gamma'\in Fix(\Gamma_k;\Gamma)$ 
such that
\begin{enumerate}
\item[{\rm (a)}] 
the chart $\Gamma'$ is obtained from $\Gamma$ 
by applying surgeries along subarcs of $J$, and
\item[{\rm (b)}] 
$D^+$ is a movable disk at label $k$ 
with respect to $\Gamma'$.
\end{enumerate}
\end{lemma}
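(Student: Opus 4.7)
My plan is to invoke Lemma~\ref{Lemma(D,J)-arcFree} and then verify directly that the chart it produces satisfies the two defining conditions of a movable disk at label $k$ for $D^+$. Since that lemma already delivers a $\Gamma'\in Fix(\Gamma_k;\Gamma)$ obtained from $\Gamma$ by surgeries along subarcs of $J$, conclusion (a) is automatic and only conclusion (b) requires work.

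Before checking movability, I would confirm that the surgeries do not ruin any hypothesis. Each surgery is performed inside a small regular neighborhood of some subarc $P_\ell\subset J$, and by shrinking one may take all such neighborhoods disjoint from $\partial(D^-\cup D^+)$ and from $Cl(\partial D^\pm-J)$. Surgeries preserve all black and white vertices and fix $\Gamma_k$ (their arcs lie in $J$ and $J\cap\Gamma_k=\emptyset$), so $(D^-,D^+,J)$ remains a separating system at label $k$ for $\Gamma'$, and the extra assumption $\partial(D^-\cup D^+)\cap\Gamma_{k+1}=\emptyset$ persists as $\partial(D^-\cup D^+)\cap\Gamma'_{k+1}=\emptyset$.

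The first movable-disk condition, $D^+\cap\mathbb{BW}(\bigcup_{i=1}^k\Gamma'_i)=\emptyset$, is exactly separating condition (ii) for $\Gamma'$. For the second, decompose $\partial D^+=J\cup Cl(\partial D^+-J)$; the latter is disjoint from $\Gamma'$ by separating condition (iv), so it suffices to prove $J\cap\Gamma'_i=\emptyset$ for each $i\in\{1,\ldots,k+1\}$. The case $i=k$ is separating condition (iii). For $i<k$, if an edge $e$ of $\Gamma'_i$ met $J$ at a point $p$, then the arc of $e$ entering $\mathrm{int}(D^+)$ could not terminate at a vertex of $\Gamma'_i$ (none exist in $D^+$) nor exit through $Cl(\partial D^+-J)$ (disjoint from $\Gamma'$), so it would have to return to $J$ and produce a $(D^+,J)$-arc of label less than $k$, contradicting Lemma~\ref{Lemma(D,J)-arcFree}. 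For $i=k+1$ I would run the symmetric argument in $D^-$: the arc entering $\mathrm{int}(D^-)$ cannot end at a vertex (no label $\ge k+1$ vertex lies in $D^-$), cannot exit through $Cl(\partial D^--J)\subset\partial(D^-\cup D^+)$ (by the preserved extra hypothesis), and cannot return to $J$ (there are no $(D^-,J)$-arcs of label greater than $k$ in $\Gamma'$).

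The only genuine obstacle is the label $k+1$ case. Lemma~\ref{Lemma(D,J)-arcFree} by itself does not forbid an edge of $\Gamma'_{k+1}$ from crossing $J$ once and escaping through $Cl(\partial D^--J)$, and no surgery along $J$ can erase a single transverse intersection that does not double back. The hypothesis $\partial(D^-\cup D^+)\cap\Gamma_{k+1}=\emptyset$ is included precisely to shut off this escape route; the only mildly subtle point is confirming that this hypothesis survives the surgeries of the first step, which it does because those surgeries are supported in the interior of $D^-\cup D^+$.
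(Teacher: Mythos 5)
Your proposal is correct and follows essentially the same route as the paper: apply Lemma~\ref{Lemma(D,J)-arcFree}, observe that the surgeries (supported near the interior of $J$) preserve the separating-system conditions and the hypothesis $\partial(D^-\cup D^+)\cap\Gamma_{k+1}=\emptyset$, and then rule out intersections of $J$ with $\bigcup_{i=1}^{k}\Gamma'_i$ inside $D^+$ and with $\Gamma'_{k+1}$ inside $D^-$ by noting that any such component would have to be a forbidden $(D^+,J)$- or $(D^-,J)$-arc. Your identification of the label $k{+}1$ case as the one place where the extra boundary hypothesis is needed matches exactly the role it plays in the paper's argument.
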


\begin{proof}
By Lemma~\ref{Lemma(D,J)-arcFree} 
there exists a chart 
$\Gamma'\in Fix(\Gamma_k;\Gamma)$ 
obtained from $\Gamma$ 
by applying surgeries along subarcs of $J$ 
such that 
\begin{enumerate}
\item[(1)] $\Gamma'$ does not possess any 
$(D^-,J)$-arcs of label greater than $k$ 
nor 
$(D^+,J)$-arcs of label less than $k$.
\end{enumerate}
Thus by Condition (ii), (iii) and (iv) of 
a separating system, 
\begin{enumerate}
\item[(2)] $D^+\cap {\Bbb{BW}}(\bigcup_{i=1}^k\Gamma'_i)=
D^+\cap {\Bbb{BW}}(\bigcup_{i=1}^k\Gamma_i)=\emptyset$,
\item[(3)] $J\cap\Gamma'_k=J\cap\Gamma_k=\emptyset$,
\item[(4)] $Cl(\partial D^+-J)\cap\Gamma'=Cl(\partial D^+-J)\cap\Gamma=\emptyset$.
\end{enumerate}
Hence
any connected component of 
$D^+\cap (\bigcup_{i=1}^k\Gamma'_i)$ 
is a hoop, a ring or a $(D^+,J)$-arc of label less than $k$.
Since $\Gamma'$ does not possess 
any $(D^+,J)$-arcs of label less than $k$
by (1), 
we have
\begin{enumerate}
\item[(5)] 
$J\cap (\bigcup_{i=1}^k\Gamma'_i)=\emptyset$.
\end{enumerate}
Since the chart $\Gamma'$ is obtained from $\Gamma$ 
by applying surgeries along subarcs of $J$,
we have
\begin{enumerate}
\item[(6)] 
$\partial (D^-\cup D^+)\cap \Gamma'_{k+1}=
\partial (D^-\cup D^+)\cap \Gamma_{k+1}=\emptyset$, and
\item[(7)] 
$D^-\cap{\Bbb{BW}}(\bigcup_{i=k+1}^\infty\Gamma'_i)=
D^-\cap{\Bbb{BW}}(\bigcup_{i=k+1}^\infty\Gamma_i)=\emptyset$ by Condition (i) of a separating system.
\end{enumerate}
Hence
any connected component of $D^-\cap \Gamma'_{k+1}$ 
is a hoop, a ring or a $(D^-,J)$-arc.
Since $\Gamma'$ does not possess any 
$(D^-,J)$-arcs of label greater than $k$
by (1), we have $J\cap \Gamma'_{k+1}=\emptyset$.
Thus (4) and (5) 
imply
\begin{enumerate}
\item[] 
$\partial D^+\cap(\bigcup_{i=1}^{k+1}\Gamma'_i)=\emptyset$.
\end{enumerate}
Therefore $D^+$ is a movable disk at label $k$ 
with respect to $\Gamma'$.
\end{proof}

Let $E$ be a disk 
in general position 
with respect to a chart $\Gamma$.
The disk $E$ is called a 
{\it c-disk at label $k$} 
with respect to $\Gamma$
provided that
in Int$E$ 
there exist mutually disjoint 
movable disks 
$D_1,D_2,\cdots ,D_s$ 
at label $k$ 
with respect to the chart $\Gamma$ 
and 
a connected component $W$ 
of 
$E-(\Gamma_k-\bigcup_{i=1}^sD_i)$ 
(see Fig.~\ref{FigureCdisk}) 
such that
\begin{enumerate}
\item[(i)] $W\supset\partial E$,
\item[(ii)] 
$W\supset\bigcup_{i=1}^s D_i$, and
\item[(iii)] 
$W\supset E\cap {\Bbb{BW}}(\bigcup_{i=k+1}^\infty\Gamma_i)$.
\end{enumerate}
We call $W$ the {\it principal domain} of the c-disk $E$.
We also call the movable disks $D_1,D_2,\cdots ,D_s$ 
{\it associated movable disks} of the c-disk $E$.

\begin{figure}[h]
\centerline{\includegraphics{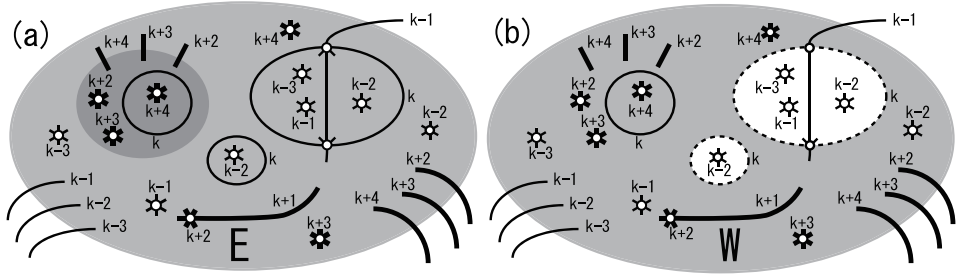}}
\caption{\label{FigureCdisk}(a) The dark disk is a movable disk in a c-disk. (b) The gray 'disk with two holes' is the principal domain of the c-disk.}
\end{figure}

\begin{lemma}
\label{LemmaFromC-diskToSeparatingSystem}
Let $E$ be a c-disk at label $k$ 
with respect to a chart $\Gamma$.
Let $p\in \partial E-\Gamma$. 
Then there exists a separating system 
$(D^-,D^+,J)$ at label $k$
for the chart $\Gamma$ with
$E=D^-\cup D^+$ and $p\in \partial D^+-J$.
\end{lemma}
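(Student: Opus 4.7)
The plan is to construct $D^+$ as a closed disk sitting inside $E$, attached to $\partial E$ along a sub-arc $L$ of $\partial E$ that contains $p$, which engulfs every $>k$ vertex of $E$ together with all the movable disks $D_1,\dots,D_s$, while avoiding every $\leq k$ vertex of $E$. Once such a $D^+$ is in place, $D^-:=Cl(E-D^+)$ is automatically a second disk (the complement in the disk $E$ of a sub-disk meeting $\partial E$ in a single arc), and $J:=D^+\cap D^-$ is the arc of $\partial D^+$ complementary to $L$, with endpoints in the interior of $L\subset\partial E-\Gamma$.

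For the construction, first pick $L$ inside the component of $\partial E-\Gamma$ that contains $p$; this settles Condition~(iv) immediately. Inside the principal domain $W$, build an embedded tree $T$ meeting a chosen point of $L$, meeting $\partial D_i$ for each movable disk $D_i$, and passing through each $>k$ vertex of $E$ not already contained in some $D_i$. Such a tree exists because $W$ is path-connected, and it can be routed to miss the finitely many $\leq k$ vertices of $E$ that happen to lie in $W$: all of these necessarily have label strictly less than $k$, since any label-$k$ vertex must lie on $\Gamma_k-\bigcup_{i=1}^s D_i\subseteq E-W$ by movable-disk condition~(i). Next, take a thin regular neighborhood of $T$ in $W$, glue in each closed movable disk $D_i$ along a sub-arc of $\partial D_i$, attach a small vertex-disk around each remaining $>k$ vertex, and attach a thin collar of $L$. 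This yields a tree-neighborhood with 2-cells attached along single arcs of their boundaries, hence a closed disk $D^+$, whose boundary decomposes as $L\cup J$ with $J$ a proper arc of $E$ lying in $W\setminus\bigcup_{i=1}^s \mathrm{Int}(D_i)$.

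Verification of the four separating-system conditions is then largely bookkeeping. Condition~(ii) is built into the construction, since the movable disks and vertex-disks avoid $\leq k$ vertices and $T$ was routed away from them. Condition~(i) follows because c-disk condition~(iii) places every $>k$ vertex of $E$ inside $W$, and each such vertex has been captured by $D^+$. For Condition~(iii), observe that $W\cap\Gamma_k\subseteq\bigcup_{i=1}^s D_i$ (since $W$ misses $\Gamma_k-\bigcup_{i=1}^s D_i$), while $\partial D_i\cap\Gamma_k=\emptyset$ and the small vertex-disks can be shrunk to miss $\Gamma_k$ (since $>k$ vertices lie off $\Gamma_k$); hence $J\cap\Gamma_k=\emptyset$. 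The main technical obstacle is the topological construction of $D^+$ itself: one must perform the successive gluings of tree-neighborhood, movable disks, vertex-disks, and collar of $L$ so that $D^+$ remains a closed disk and $D^-=Cl(E-D^+)$ does not acquire extra holes.
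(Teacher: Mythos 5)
Your proposal is correct and follows essentially the same route as the paper: the paper also engulfs the associated movable disks together with small regular neighbourhoods of the remaining label-$>k$ vertices of the principal domain $W$, joins them by disjoint arcs in $W$ (a chain rather than your tree) ending at the point $p$, takes $D^+$ to be a regular neighbourhood of this union in $E$, sets $D^-=Cl(E-D^+)$ and $J=D^-\cap D^+$, and verifies the four separating-system conditions just as you do, using $W\cap\Gamma_k\subset\bigcup_{i=1}^{s}D_i$ and $\partial D_i\cap\Gamma_k=\emptyset$ for $J\cap\Gamma_k=\emptyset$. The only cosmetic differences are tree versus chain and that the paper additionally records the general-position requirements (the connecting arcs avoid crossings and meet edges of $\Gamma$ transversely), which you should state when routing your tree.
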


\begin{proof}
Let $W$ be 
the principal domain of the c-disk $E$ 
and $D_1,D_2,\cdots ,D_s$ 
associated movable disks of $E$.
Suppose that
\begin{enumerate}
\item[]
$(W-\bigcup_{i=1}^sD_i)\cap {\Bbb{BW}}(\bigcup_{i=k+1}^\infty\Gamma_i)
=\{~w_{s+1},w_{s+2},\dots ,w_{t}~\}$.
\end{enumerate}
For each $i=s+1,s+2,\dots, t$, let
$D_i$ be a regular neighbourhood of $w_i$ in $W$.
Then by Condition (iii) of a c-disk
we have
\begin{enumerate}
\item[(1)] 
$E\cap {\Bbb{BW}}(\bigcup_{i=k+1}^\infty\Gamma_i)
\subset \bigcup_{i=1}^tD_i$.
\end{enumerate}
Since $W$ is a connected component of 
$E-(\Gamma_k-\bigcup_{i=1}^sD_i)$ 
by the condition of a c-disk,
we have
\begin{enumerate}
\item[] 
$(W-\bigcup_{i=1}^sD_i)\cap\Gamma_k=\emptyset$.
\end{enumerate}
Hence there exist mutually disjoint 
simple arcs 
$\ell_1,\ell_2,\cdots ,\ell_t$ in $W$ 
(see Fig.~\ref{FigureFromCdiskToSeparatingSystem}(a)) 
such that 
\begin{enumerate}
\item[(2)] 
for each $i=1,2,\cdots ,t,$ 
$\ell_i\cap\Gamma_k=\emptyset$, and 
$\partial \ell_i\cap\Gamma=\emptyset$,
\item[(3)] 
for each $i=1,2,\cdots ,t,$ 
the arc $\ell_i$ does not contain any white vertices, black vertices nor crossings of $\Gamma$, and 
the arc $\ell_i$ intersects edges of $\Gamma$ transversely,
\item[(4)] 
$(\bigcup_{i=1}^t {\rm Int}(\ell_i))\cap (\bigcup_{i=1}^tD_i)=\emptyset,$
\item[(5)] 
for each $i=1,2,\cdots ,t-1,$ 
the arc $\ell_i$ connects a point on $\partial D_i$ and a point on $\partial D_{i+1}$, 
and 
\item[(6)] 
the arc $\ell_t$ connects a point on $\partial D_{t}$ and the point $p$.
\end{enumerate}
Let $D^+$ be a regular neighbourhood of 
$\bigcup_{i=1}^t(D_i\cup \ell_i)$ 
in $E$ and 
$D^-=Cl(E-D^+)$.
Then $D^-$ and $D^+$ are disks 
with $E=D^-\cup D^+$.
Since
$E\cap {\Bbb{BW}}(\bigcup_{i=k+1}^\infty \Gamma_i)
\subset\bigcup_{i=1}^tD_i\subset D^+$
by (1), 
we have 
\begin{enumerate}
\item[] 
$D^-\cap {\Bbb{BW}}(\bigcup_{i=k+1}^\infty \Gamma_i)=\emptyset$ 
(see Fig.~\ref{FigureFromCdiskToSeparatingSystem}(b)).
\end{enumerate}
Since 
$w_{s+1},w_{s+2},\dots ,w_{t}
\in W-\bigcup_{i=1}^s D_i
\subset E-\Gamma_k$, 
we have 
$w_{s+1}, w_{s+2}, \dots, w_{t} 
\not \in \Gamma_k$.
Thus 
\begin{enumerate}
\item[(7)] none of $D_{s+1},D_{s+2},\cdots ,D_t$ intersect 
$\bigcup_{i=1}^k \Gamma_i$.
\end{enumerate}
Since none of the movable disks at label $k$ nor 
$D_{s+1},D_{s+2},\cdots ,D_t$ intersect 
${\Bbb{BW}}(\bigcup_{i=1}^k \Gamma_i)$,
we have 
\begin{enumerate}
\item[]
$D^+\cap{\Bbb{BW}}(\bigcup_{i=1}^k \Gamma_i)=\emptyset$.
\end{enumerate}
Let $L=D^+\cap \partial E$. 
Then $L$ is a regular neighbourhood of $p$ in $\partial E$.
Since $p\in\partial E-\Gamma$, we have 
$L\cap \Gamma=\emptyset$.
Let $J=D^-\cap D^+$. 
Then $L=Cl(\partial D^+-J)$.
Thus $Cl(\partial D^+-J)\cap \Gamma=L\cap\Gamma=\emptyset$.
Since for each $i=1,2,\cdots ,t$, 
$\partial D_i\cap \Gamma_k=\emptyset$ 
by (2), (7) and Condition (ii) of a movable disk,
we have 
$J\cap \Gamma_k=\emptyset$.
Therefore the triplet $(D^-,D^+,J)$ is a desired separating system at label $k$ for $\Gamma$.
\end{proof}

\begin{figure}[h]
\centerline{\includegraphics{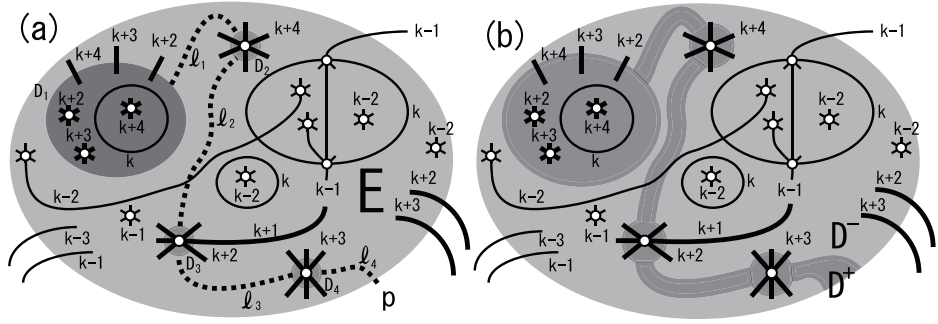}}
\caption{\label{FigureFromCdiskToSeparatingSystem}(a) The dark disk is a movable disk.}
\end{figure}

\section{$\omega_k$-minimal charts}
\label{s:OmegaMinimal}

Let $\Gamma$ be a chart.
Let $\Gamma^*$ be a chart in $Fix(\Gamma_k;\Gamma)$. 
We denote $\Gamma^*\overset{k}{\approx}\Gamma$
provided that
\begin{enumerate}
\item[] 
for any complementary domain $U$ of $\Gamma_k$, 
the domain $U$ contains mutually disjoint 
movable disks 
$D_1,D_2,\cdots ,D_s$ 
at label $k$ with respect to $\Gamma^*$ 
with 
$U\cap(\Gamma_k^*-\Gamma_k) \subset \bigcup_{i=1}^sD_i$.
\end{enumerate}
The movable disks $D_1,D_2,\cdots ,D_s$ 
are called 
{\it basic movable disks at label $k$} 
with respect to $U$ and $\Gamma^*$.
Let
\begin{enumerate}
\item[]
$\Omega(\Gamma_k;\Gamma)=
\{~\Gamma^*~|~\Gamma^*\overset{k}{\approx}\Gamma~\}$.
\end{enumerate}
Since $\Gamma\in\Omega(\Gamma_k;\Gamma)$, 
we have $\Omega(\Gamma_k;\Gamma)\neq\emptyset$.

Let $\Gamma$ be a chart,
$k$ a label of $\Gamma$,
and $\Gamma'\in \Omega(\Gamma_k;\Gamma)$.
Let
$U$ be a complementary domain of $\Gamma_k$ 
, and 
$D$ a movable disk at label $k$ in $U$ 
with respect to $\Gamma'$.
Let $F=Cl(U)$.
Let $p\in\partial F-\bigcup_{i\not=k}\Gamma_i'$ and $q\in\partial D$.
Suppose that {\it there exists  
a simple arc $\alpha$ in $F$ 
connecting the two points $p$ and $q$
with
$\alpha\cap \Gamma'=p$ and 
$\alpha\cap D=q$} 
(see Fig.~\ref{ShiftCdisk1}(a)).
Then
we can shift the movable disk $D$ at label $k$ 
to 
another complementary domain of $\Gamma_k$ 
as follows (see Fig.~\ref{ShiftCdisk1}): 
Let $N_1$ be a regular neighbourhood of 
$D\cup\alpha$ in $F$. 
Let $N_2$ be 
a regular neighbourhood of 
$N_1$ in $F$ and
$N_3$ a regular neighbourhood of 
$N_2$ in $F$.
For each $i=1,2,3$ let
$\beta_i=N_i\cap\partial F$ and 
$\gamma_i=Cl(\partial N_i-\beta_i)$ 
(see Fig.~\ref{ShiftCdisk1}(b)).
Let $\Gamma''$ be a chart with 
\begin{enumerate}
\item[]
$
\Gamma''_j=
\left\{
\begin{array}{ll}
(\Gamma'_k-\beta_3)\cup(\gamma_3\cup \partial N_1)&\hspace{4mm} \text{if $j=k$,}\\ 
\Gamma'_j &\hspace{4mm} \text{otherwise.}
\end{array}
\right. 
$
\end{enumerate}
Then 
$\Gamma'$ 
is C-move equivalent to $\Gamma''$ 
by 
modifying $\partial F$ by 
C-I-R2 moves along $\gamma_2$
and a C-I-M2 move
(see Fig.~\ref{ShiftCdisk1}(c)).
Let $N$ be a regular neighbourhood of $N_1$.
Then $N$ is a movable disk at label $k$ 
with respect to $\Gamma''$.
In a regular neighbourhood of $N_3$ 
we can modify the arc $\gamma_3$ to 
the arc $\beta_3$ 
by an ambient isotopy 
keeping $\partial \gamma_3$ fixed. 
Let $\Gamma^*$ be 
the resulting chart and
$D^*$ the disk 
modified from the movable disk $N$
(see Fig.~\ref{ShiftCdisk1}(d)).
Then $\Gamma^*$ is in $Fix(\Gamma_k;\Gamma)$ and
$D^*$ is a movable disk at label $k$ 
with respect to $\Gamma^*$.
Now $\Gamma^*_k$ is the union of 
$\Gamma'_k$ and a ring in the movable disk $D^*$.
Thus 
$\Gamma^*\overset{k}{\approx}\Gamma$.
We say that
$\Gamma^*$ is {\it obtained from $\Gamma'$ 
by shifting the movable disk $D$ 
to the outside of $Cl(U)$ 
along the arc $\alpha$} 
and that 
$D^*$ is a 
{\it movable disk induced from 
the movable disk $D$ 
circled by a ring of label $k$}. 

\begin{figure}[h]
\centerline{\includegraphics{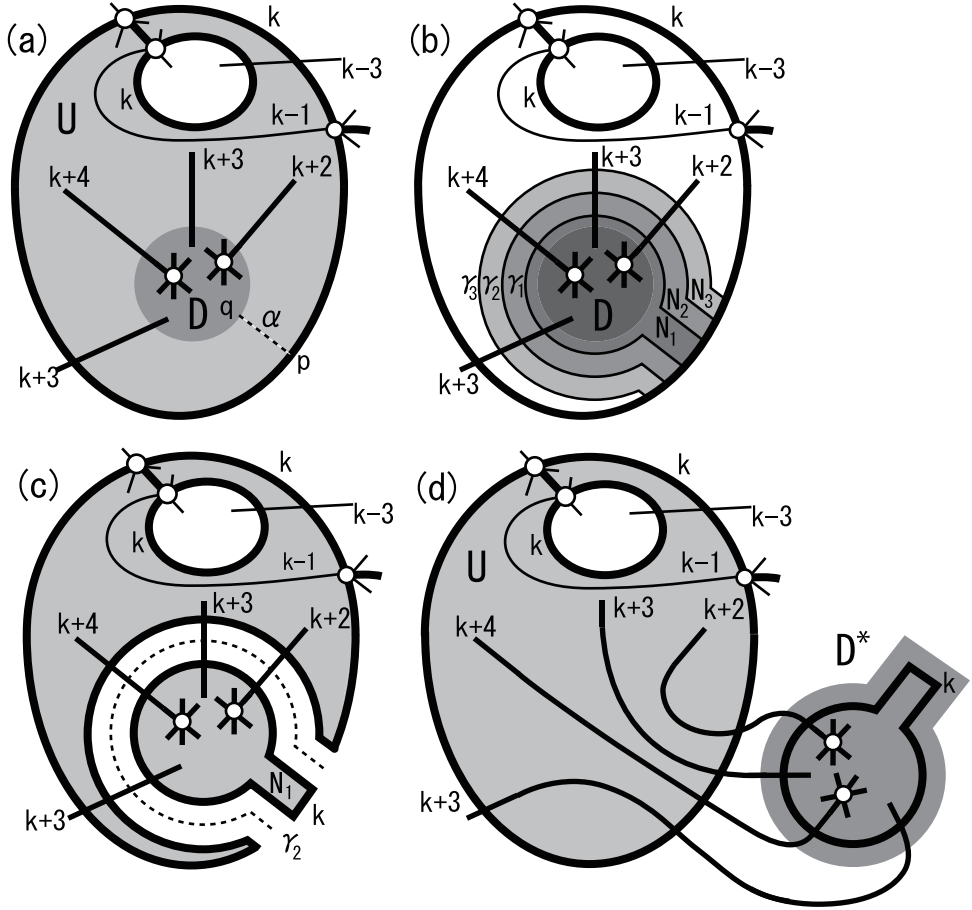}}

\caption{\label{ShiftCdisk1} Shifting a movable disk.}
\end{figure}


Let $\Gamma$ be a chart and 
$k$ a label of $\Gamma$. 
Let $T$ be a maximal tree of the dual graph of 
$\Gamma_k$. 
Namely 
\begin{enumerate}
\item[(i)] 
each vertex $v$ of the tree $T$ 
corresponds to 
a complementary domain $U_v$ 
of $\Gamma_k$, and
\item[(ii)] 
each edge $e$ of the tree $T$ with 
$\partial e=\{ v_1,v_2 \}$ 
corresponds to an edge 
$e_\Gamma$ of $\Gamma_k$ 
with $e_\Gamma\subset 
Cl(U_{v_1})\cap Cl(U_{v_2})$.
\end{enumerate}
The tree $T$ is called a {\it dual tree 
with respect to $\Gamma_k$}.

Let $\Gamma$ be a chart and 
$k$ a label of $\Gamma$. 
Let $T$ be a dual tree with respect to 
$\Gamma_k$. 
Let $v_0$ be the vertex of $T$ 
which corresponds to  
the unbounded complementary domain 
of $\Gamma_k$.
Let $V(T)$ be the set of all vertices of $T$. 
For each vertex $v\in V(T)$,
let $T(v,v_0)$ be the path in $T$ connecting 
$v$ and $v_0$. 
Let
\begin{enumerate}
\item[] 
$length(v;T)=$ the number of edges in $T(v,v_0)$.
\end{enumerate}
We have $length(v_0;T)=0$. 
For each chart 
$\Gamma^*\in \Omega(\Gamma_k;\Gamma)$ 
and 
$v\in V(T)$, 
let
\begin{enumerate}
\item[]
$
weight_k(v;\Gamma^*)=
\left\{
\begin{array}{ll}
0\hspace{10mm}&
\text{if $U_v\cap {\Bbb{BW}}(\bigcup_{i=k+1}^\infty\Gamma_i^*)=\emptyset$},\\
1& \text{otherwise,}
\end{array}
\right.
$
\item[] 
$\omega_k(\Gamma^*,T)=
{\sum_{v\in V(T)}weight_k(v;\Gamma^*)\times length(v;T)}$.
\end{enumerate}

A chart $\Gamma'\in \Omega(\Gamma_k;\Gamma)$ 
is {\it $\omega_k$-minimal}
if 
there exists a dual tree $T$ 
with respect to 
$\Gamma_k$ such that 
$\omega_k(\Gamma',T)=\min\{~\omega_k(\Gamma^*,T)~|~\Gamma^*\in \Omega(\Gamma_k;\Gamma)~\}$.

\begin{lemma}
\label{LemmaOmega_kMinimal}
Let $\Gamma$ be a chart 
which is zero at label $k$. 
If a chart $\Gamma'
\in \Omega(\Gamma_k;\Gamma)$ is
$\omega_k$-minimal, then 
there exists a dual tree $T$ 
with respect to $\Gamma_k$ 
with $\omega_k(\Gamma',T)=0$.
\end{lemma}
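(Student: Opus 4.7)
The plan is to argue by contradiction: suppose that $\Gamma'$ is $\omega_k$-minimal yet $\omega_k(\Gamma',T)\ge 1$ for every dual tree $T$ with respect to $\Gamma_k$. Fix such a $T$ realizing the minimum and pick $v\in V(T)$ with $weight_k(v;\Gamma')=1$ and $length(v;T)$ maximal; then $length(v;T)\ge 1$, so $v\ne v_0$. Let $v'$ be the neighbor of $v$ on the path $T(v,v_0)$ and let $e_\Gamma\subset \Gamma_k$ denote the edge of $\Gamma_k$ corresponding to $\{v,v'\}$. The aim is to construct a chart $\Gamma^*\in \Omega(\Gamma_k;\Gamma)$ in which every black and white vertex of $\bigcup_{i>k}\Gamma'_i$ that formerly lay in $U_v$ has been transported to $U_{v'}$, producing a contradiction with $\omega_k$-minimality via a direct weight comparison.

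To accomplish this, I would first build a c-disk $E$ at label $k$ with respect to $\Gamma'$ sitting inside $\mathrm{Int}(U_v)$, which contains every basic movable disk at label $k$ in $U_v$ supplied by $\Gamma'\overset{k}{\approx}\Gamma$, together with every vertex of $\bigcup_{i>k}\Gamma'_i$ lying in $U_v$, with $\partial E$ in general position with respect to $\Gamma'$ and disjoint from $\Gamma'_{k+1}$. Because $U_v$ is a complementary domain of $\Gamma_k$, we have $U_v\cap\Gamma_k=\emptyset$, hence inside $E$ the set $\Gamma_k-\bigcup_i D_i$ is empty; the c-disk conditions then hold with $W=E$ and the basic movable disks as associated movable disks. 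Choosing $p\in\partial E\setminus \Gamma'$ and applying Lemma~\ref{LemmaFromC-diskToSeparatingSystem} yields a separating system $(D^-,D^+,J)$ at label $k$ for $\Gamma'$ with $E=D^-\cup D^+$; since $\partial(D^-\cup D^+)\cap\Gamma'_{k+1}=\emptyset$ by construction, Lemma~\ref{LemmaMovableDisk} then produces a chart $\Gamma''\in\Omega(\Gamma_k;\Gamma)$, obtained from $\Gamma'$ by C-I-M2 and C-I-R2 moves, in which $D^+$ is a movable disk at label $k$ containing every black and white vertex of labels greater than $k$ previously in $U_v$.

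Finally, I would apply the shifting construction of Section~\ref{s:OmegaMinimal} to $D^+$: pick a point $p_0$ in the interior of $e_\Gamma$ off all crossings and white vertices of $\Gamma''$, a point $q\in\partial D^+$, and a simple arc $\alpha\subset Cl(U_v)$ from $p_0$ to $q$ meeting $\Gamma''$ only at $p_0$ and $D^+$ only at $q$. Shifting $D^+$ along $\alpha$ across $e_\Gamma$ yields a chart $\Gamma^*\in\Omega(\Gamma_k;\Gamma)$ in which the translated movable disk $D^*$, circled by a new ring of label $k$, is a basic movable disk inside $U_{v'}$. The only complementary domains of $\Gamma_k$ whose weights may change are $U_v$ (now weight $0$) and $U_{v'}$ (weight at most $1$), so
\[
\omega_k(\Gamma^*,T)-\omega_k(\Gamma',T)\le -\,length(v;T)+length(v';T)=-1,
\]
contradicting the $\omega_k$-minimality of $\Gamma'$.

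The principal obstacle is establishing the existence of the simple arc $\alpha\subset Cl(U_v)$ in the last step: edges of $\Gamma''$ of labels $\le k-2$ that cross $\partial U_v$ through crossings of $\Gamma''$ may separate $p_0$ from $\partial D^+$ inside $U_v\setminus D^+$. I expect this will be resolved either by enlarging $E$ so that $D^+$ absorbs any such obstructing arcs, or by iterating the shifting procedure over a sequence of smaller movable disks, or by exploiting the maximality of $length(v;T)$ to argue that the complementary component of $U_v\setminus (\Gamma''\cup D^+)$ containing $\partial D^+$ necessarily abuts $e_\Gamma$.
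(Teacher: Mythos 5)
Your overall skeleton (contradiction, vertex of maximal length with positive weight, c-disk $\to$ separating system $\to$ movable disk $\to$ shift across $e_\Gamma$, then the weight count) is the same as the paper's, but the step you yourself flag as ``the principal obstacle'' is a genuine gap, and it is in fact two gaps. First, your c-disk $E$ is required to sit inside ${\rm Int}(U_v)$ with $\partial E\cap\Gamma'_{k+1}=\emptyset$ and to contain every black and white vertex of $\bigcup_{i>k}\Gamma'_i$ lying in $U_v$. Since $\Gamma'$ is zero at label $k$, every connected component of $\Gamma'_{k+1}$ meeting $U_v$ lies entirely in $U_v$; if such a component carries a white vertex and encircles a hole of $U_v$ (a bounded complementary domain of $\Gamma_k$ surrounded by $U_v$), then no disk contained in $U_v$ can contain that white vertex while keeping its boundary off $\Gamma'_{k+1}$: containing the vertex with $\partial E\cap\Gamma'_{k+1}=\emptyset$ forces the whole component into $E$, hence the hole into $E$, contradicting $E\subset U_v$. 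So your very first construction can be impossible. Second, even when $E$ exists, nothing guarantees the connecting arc $\alpha$: hoops, rings, and arcs of labels $\le k-1$ or $\ge k+2$ crossing $U_v$ can separate the component of $U_v-\Gamma''$ adjacent to $p_0\in e_\Gamma$ from $\partial D^+$, and none of your three suggested fixes is carried out; in particular the maximality of $length(v;T)$ says nothing about such obstructing edges, since they need not carry any black or white vertex of label $>k$.

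The paper resolves both points simultaneously by choosing $E$ differently: it is \emph{not} contained in $U_v$. Taking $\delta$ to be the outer boundary component of $\partial V_1$, $N(\delta)$ a thin regular neighbourhood, $W=Cl(V_1-N(\delta))$ and $E$ the disk bounded by the outermost component $\ell$ of $\partial W$, the disk $E$ contains all of $V_1$ except a thin collar along $\delta$ \emph{together with all the complementary domains surrounded by} $V_1$. The c-disk condition $E\cap{\Bbb{BW}}(\bigcup_{i>k}\Gamma'_i)\subset W$ then needs the surrounded domains to be free of such vertices, and this is exactly where the maximality of $length(v_1;T)$ (combined with $\Gamma'_k\cap\Gamma'_{k+1}=\emptyset$) is used — a use of maximality absent from your construction, which is a symptom of the gap. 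With this choice, $V_1-E$ is only the thin collar along $\delta$, every component of $(V_1-E)-\Gamma'$ is a small region abutting both $\delta$ and $\partial E$, and the arc $\alpha$ from $p\in{\rm Int}(e_\Gamma)$ to $q\in\partial E$ exists automatically; all potentially obstructing edges are simply absorbed into $E$ (they may cross $\partial E$ transversally, which is harmless since Lemma~\ref{LemmaMovableDisk} only needs $\partial E\cap\Gamma'_{k+1}=\emptyset$). Your weight computation at the end is fine, but without the collar construction (or some substitute) the geometric heart of the proof is missing.
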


\begin{proof} 
Suppose that 
for a dual tree $T$ with respect to 
$\Gamma_k$, we have
\begin{enumerate}
\item[]
$\omega_k(\Gamma',T)
=\min\{~\omega_k(\Gamma^*,T)~|~\Gamma^*\in \Omega(\Gamma_k;\Gamma)~\}$.
\end{enumerate}
We shall show $\omega_k(\Gamma',T)=0$ 
by contradiction.
Suppose that
$\omega_k(\Gamma',T)>0$.
Let $V(T)$ be the set of all vertices of $T$ and
\begin{enumerate}
\item[]
${\Bbb P}=\{~v\in V(T)~|~weight_k(v;\Gamma')\times length(v;T)>0~\}$. 
\end{enumerate}
Then $\omega_k(\Gamma',T)>0$ implies 
${\Bbb P}\neq\emptyset$.
Let $v_1$ be a vertex in $\Bbb P$ such that
\begin{enumerate}
\item[(1)]
$length(v_1;T)=\max\{~length(v;T)~|~v\in {\Bbb P}~\}$.
\end{enumerate}
Then $v_1\in \Bbb P$ implies
\begin{enumerate}
\item[(2)]
$0<weight_k(v_1;\Gamma')\times length(v_1;T)=length(v_1;T)$.
\end{enumerate}

Let 
$V_1$ be a complementary domain 
of $\Gamma_k$ 
corresponding to $v_1$. 
Since $\Gamma'\in\Omega(\Gamma_k;\Gamma)$, 
there exist 
basic movable disks 
$D_1,D_2,\cdots ,D_s$ 
at label $k$ 
with respect to $V_1$ and $\Gamma'$ 
(see Fig.~\ref{FigureOmegakMinimal}(a)) 
such that
\begin{enumerate}
\item[(3)]
$V_1\cap(\Gamma'_k-\Gamma_k)\subset \bigcup_{i=1}^sD_i$.
\end{enumerate}
Let $\delta$ be 
a connected component of $\partial V_1$ 
such that 
a bounded complementary domain of $\delta$ 
contains $V_1$
(see Fig.~\ref{FigureOmegakMinimal}(b)).
Let $N(\delta)$ be 
a regular neighbourhood of $\delta$ 
and 
$W=Cl(V_1-N(\delta))$
(see Fig.~\ref{FigureOmegakMinimal}(c)). 
Let $\ell$ be 
a connected component of $\partial W$ 
such that
$W$ is contained in 
the disk $E$ 
bounded by $\ell$
(see Fig.~\ref{FigureOmegakMinimal}(c) and (d)). 
Then we have 
\begin{enumerate}
\item[]
$W\supset \partial E$.
\end{enumerate}
\begin{figure}[h]
\centerline{\includegraphics{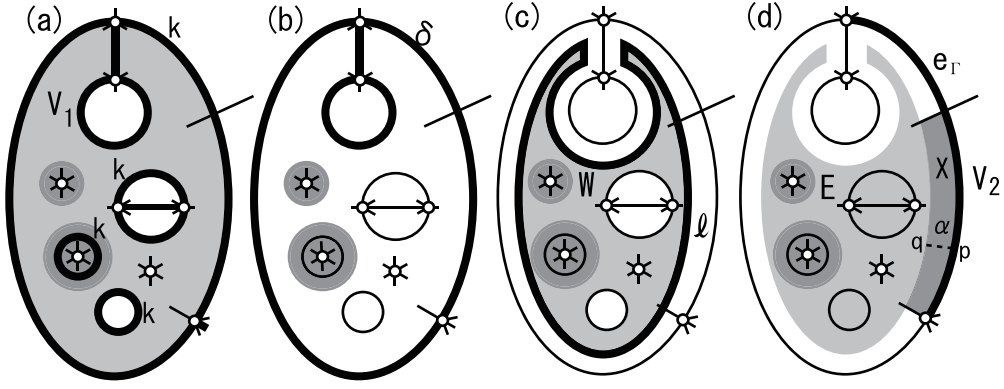}}
\caption{\label{FigureOmegakMinimal} Dark disks are movable disks at label $k$. 
(a)~The thicken curves are of label $k$. 
(b)~The thicken curve is the set $\delta$. 
(c)~The thicken curve is the simple closed curve $\ell$. 
(d)~The thicken curve is the edge $e_\Gamma$. 
The set $X$ is a disk with ${\rm Int}X\cap {\rm Int}E=\emptyset$.}
\end{figure}


{\bf Claim.} The disk $E$ is a c-disk.

{\it Proof of Claim.}
Since $\Gamma'\overset{k}{\approx}\Gamma$, 
the chart $\Gamma'$ is zero at label $k$. 
Hence 
$\delta\subset \Gamma_k\subset \Gamma'_k$ 
implies that
$\delta\cap \Gamma'_{k+1}=\emptyset$. Thus
\begin{enumerate}
\item[(4)]
$\partial E\cap(\Gamma'_k\cup\Gamma'_{k+1})=\emptyset$.
\end{enumerate}
Now 
$\delta\cap (\bigcup_{i=1}^sD_i)=\emptyset$
implies 
$N(\delta)\cap (\bigcup_{i=1}^sD_i)=\emptyset$. 
Thus 
$V_1\supset \bigcup_{i=1}^sD_i$ 
implies 
\begin{enumerate}
\item[]
$W=Cl(V_1-N(\delta))\supset 
\bigcup_{i=1}^sD_i$.
\end{enumerate}
Let $\Bbb S$ be the set of 
all connected components of ${E-(\Gamma_k-\bigcup_{i=1}^sD_i)}$ 
different from $W$. 
Then we have
\begin{enumerate}
\item[(5)]
$E=W\cup (\bigcup\{~Cl(U)~|~U\in{\Bbb S}~\})$.
\end{enumerate}

For each domain $U\in \Bbb S$, 
let $v_U$ be the vertex of $T$ corresponding to $U$. 
Since $U$ is surrounded by $V_1$, 
we have $v_1\in T(v_U,v_0)$
where
$v_0$ is the vertex of $T$ which 
corresponds to 
the unbounded complementary domain 
of $\Gamma_k$. Hence
\begin{enumerate}
\item[] 
$length(v_U;T)\geq  length(v_1;T)+1$ and
\item[] 
$weight_k(v_U;\Gamma')=0$ 
\end{enumerate}
by the property (1) of the vertex $v_1$. 
This means
\begin{enumerate}
\item[] 
$U\cap 
{\Bbb {BW}}(\bigcup_{i=k+1}^\infty\Gamma'_i)
=\emptyset$.
\end{enumerate}
Since $\Gamma'\overset{k}{\approx}\Gamma$, 
the chart $\Gamma'$ is zero at label $k$. 
Thus
$\Gamma'_k\cap \Gamma'_{k+1}=\emptyset$. 
Hence $\partial U\subset \Gamma_k\subset \Gamma'_k$ 
implies  
\begin{enumerate}
\item[] 
$Cl(U)\cap 
{\Bbb {BW}}(\bigcup_{i=k+1}^\infty\Gamma'_i)
=\emptyset$.
\end{enumerate}
Thus (5)
implies that 
\begin{enumerate}
\item[] 
$E\cap 
{\Bbb {BW}}(\bigcup_{i=k+1}^\infty\Gamma'_i)
\subset W$.
\end{enumerate}
Therefore 
the disk $E$ is 
a c-disk at label $k$ 
with respect to $\Gamma'$.
Hence Claim holds.

Let $T(v_1,v_0)$ be the path in the tree $T$ 
connecting $v_1$ and $v_0$. 
Let $e$ be the edge of $T(v_1,v_0)$ 
with $e\ni v_1$. 
Let $v_2$ be the vertex of $e$ different from $v_1$. 
Now we have 
\begin{enumerate}
\item[(6)]
$length(v_2;T)=length(v_1;T)-1$.
\end{enumerate}
Let $e_\Gamma$ be the edge of $\Gamma_k$ 
corresponding to the edge $e$ 
which connects the two vertices 
$v_1$ and $v_2$. 
Let $V_2$ be the complementary domain of $\Gamma_k$ 
corresponding to $v_2$. 
Then we have 
$e_\Gamma\subset Cl(V_1)\cap Cl(V_2)$. 
Let $p$ be a point in 
${\rm Int}(e_\Gamma)-\bigcup_{i\neq k}\Gamma'_i$ 
and  
$X$ the closure of 
a connected component of
$(V_1-E)-\Gamma' $ with $X\ni p$. 
Let $q$ be a point in $\partial E\cap X$ and 
$\alpha$ a proper simple arc in $X$ 
with $\alpha\cap\Gamma'=p$ 
and 
$\alpha\cap E=q$ 
(see Fig.~\ref{FigureOmegakMinimal}(d)). 

By Lemma~\ref{LemmaFromC-diskToSeparatingSystem} 
there exists a separating system $(D^-,D^+,J)$ 
at label $k$ 
for $\Gamma'$ with 
$E=D^-\cup D^+$ 
and $q\in D^+\cap \partial E$.

Now 
(4) implies 
\begin{enumerate}
\item[]
$\partial (D^-\cup D^+)\cap \Gamma'_{k+1}=
\partial E\cap \Gamma'_{k+1}=\emptyset$. 
\end{enumerate}
Thus 
by Lemma~\ref{LemmaMovableDisk} 
there exists a chart $\Gamma''\in Fix(\Gamma_k;\Gamma)$ obtained from
$\Gamma'$ by applying surgeries 
along subarcs of $J$ such that
$D^+$ is a movable disk at label $k$ 
with respect to $\Gamma''$.

Let $\Gamma^*$ be a chart 
obtained from $\Gamma''$ 
by shifting the movable disk $D^+$ 
to the outside of $Cl(V_1)$ 
along the arc $\alpha$ 
and 
let $D^{*}$ be a movable disk induced from 
the movable disk $D^+$ 
circled by a ring of label $k$.
Then $D^*$ is a new movable disk in $V_2$ 
disjoint from the old movable disks in $V_2$.
Hence we have 
$\Gamma^*\in\Omega(\Gamma_k;\Gamma)$. 
Now\\
$
weight_k(v_1;\Gamma')\times length(v_1;T)=length(v_1;T)>0$ by (2),\\ 
$weight_k(v_1;\Gamma^*)\times length(v_1;T)=0,$\\ 
$weight_k(v_2;\Gamma^*)\times length(v_2;T)=length(v_2;T)
=length(v_1;T)-1$ by (6), and\\
$weight_k(v;\Gamma^*)\times length(v;T)=
weight_k(v;\Gamma')\times length(v;T)~~
(v\neq v_1,v_2)$\\
imply that
\begin{enumerate}
\item[] 
$\omega_k(\Gamma^*,T)\leq \omega_k(\Gamma',T)-1$.
\end{enumerate}
This contradicts the fact that $\Gamma'$ is 
$\omega_k$-minimal.
Therefore $\omega_k(\Gamma',T)=0$.
\end{proof}

\section{Proof of Main Theorem}
\label{s:ProofMainTheorem}


{\it Proof of Theorem~\ref{MainTheorem}.}
Suppose that a chart $\Gamma$ is zero at label $k$. 
Then there exists a chart $\Gamma'\in \Omega(\Gamma_k;\Gamma)$ 
such that 
$\Gamma'$ is $\omega_k$-minimal. 
Since $\Gamma$ is zero at label $k$, 
the chart $\Gamma'$ is zero at label $k$, too. Thus 
\begin{enumerate}
\item[(1)] $\Gamma_k'\cap\Gamma_{k+1}'=\emptyset$.
\end{enumerate}
Let $E$ be a disk containing the chart $\Gamma'$ 
in its inside,
i.e. Int$E\supset \Gamma'$.
By Lemma~\ref{LemmaOmega_kMinimal} 
there exists a dual tree $T$ 
with respect to 
$\Gamma_k$ with $\omega_k(\Gamma',T)=0$. 
Namely 
${\Bbb {BW}}(\bigcup_{i=k+1}^\infty\Gamma'_i)$ 
is contained in  
the unbounded complementary domain $U$ 
of $\Gamma_k$. 
Now 
$U\cap \Gamma_k=\emptyset$.
Since 
$\Gamma'\in\Omega(\Gamma_k;\Gamma)$,
the set $U\cap (\Gamma'_k-\Gamma_k)$ is 
contained in the union of
basic movable disks of label $k$ with respect to $U$ 
and $\Gamma'$.
Furthermore, since $E\cap U$ is connected, 
by the same way as the one in 
Lemma~\ref{LemmaFromC-diskToSeparatingSystem} 
there exists a disk $D^+$ in $E$ such that
\begin{enumerate}
\item[(2)] 
$D^+$ is in general position 
with respect to $\Gamma'$,
\item[(3)] 
$D^+\cap \partial E$ is an arc,
\item[(4)] 
$\partial D^+\cap \Gamma'_k=\emptyset$, and
\item[(5)] 
${\Bbb {BW}}(\Gamma')\cap D^+=
{\Bbb {BW}}(\bigcup_{i=k+1}^\infty\Gamma'_i)$.
\end{enumerate}
Let 
$D^-=Cl(E-D^+)$ and $J=D^-\cap D^+$.
Then 
we have 
\begin{enumerate}
\item[(6)] $D^-\cap {\Bbb {BW}}(\bigcup_{i=k+1}^\infty\Gamma'_i)=\emptyset$ 
and 
$D^+\cap {\Bbb {BW}}(\bigcup_{i=1}^k\Gamma'_i)=\emptyset$ by (1) and (5), 
\item[(7)] $Cl(\partial D^+-J)\cap\Gamma'\subset \partial E\cap\Gamma'=\emptyset$
by Int$E\supset\Gamma'$,
\item[(8)] $J\cap\Gamma_k'\subset \partial D^+\cap\Gamma_k'=\emptyset$ by (4).
\end{enumerate}
Hence the triplet 
$(D^-,D^+,J)$ is a separating system at label $k$ for $\Gamma'$.
By Lemma~\ref{Lemma(D,J)-arcFree} 
there exists a chart 
$\Gamma''\in Fix(\Gamma_k;\Gamma)$ 
obtained from $\Gamma'$ 
by applying surgeries along 
subarcs of $J$ such that 
\begin{enumerate}
\item[(9)] $\Gamma''_k=\Gamma'_k$,
\item[(10)] $\Gamma''$ does not possess any 
$(D^+,J)$-arcs of label less than $k$
nor 
$(D^-,J)$-arcs of label 
greater than $k$, 
\item[(11)] $D^-\cap {\Bbb {BW}}(\bigcup_{i=k+1}^\infty\Gamma''_i)=\emptyset$ and $D^+\cap {\Bbb {BW}}(\bigcup_{i=1}^k\Gamma''_i)=\emptyset$ by (6). 
\end{enumerate}
Now (11) and $\partial (D^-\cup D^+)\cap \Gamma''=
\partial E\cap \Gamma''=\emptyset$ 
imply that
\begin{enumerate}
\item[(12)]
for each label $i$ with $i< k$ 
if a connected component of $\Gamma''_i\cap D^+$ 
intersects $J$, 
then the component is a 
$(D^+,J)$-arc, 
\item[(13)]
for each label $i$ with $k<i$ 
if a connected component of 
$\Gamma''_i\cap D^-$ 
intersects $J$, 
then the component is a 
$(D^-,J)$-arc.
\end{enumerate}
Since $J\cap\Gamma''_k=J\cap\Gamma_k'=\emptyset$ by (8) and (9),
we have 
$J\cap\Gamma''=\emptyset$ 
by (10), (12) and (13).
Thus 
$(\partial D^+-J)\subset \partial E$
implies 
\begin{enumerate}
\item[]
$\partial D^+\cap \Gamma''=\emptyset$.
\end{enumerate}
Let 
$\Gamma^*=D^-\cap \Gamma''$ and 
$\Gamma^{**}=D^+\cap \Gamma''$. 
Then 
$\Gamma''$ is the product of $\Gamma^*$ and $\Gamma^{**}$.
Now 
$\Gamma''\in Fix(\Gamma_k;\Gamma)$ 
implies that  $\Gamma''$ is zero at label $k$.
Thus there exist two labels $i$ and $j$ 
with $i\leq k<j$, 
$w(\Gamma''_i)\neq 0$ and 
$w(\Gamma''_j)\neq 0$.
Namely 
$w(\Gamma^{*})>0$ and $w(\Gamma^{**})>0$.
Now 
(11)
implies
\begin{enumerate}
\item[]
$w(\Gamma^{*}_i)=0$ for all label $i$ 
with $k<i$, and
\item[]
$w(\Gamma^{**}_i)=0$ for all label $i$ 
with $i\leq k$.
\end{enumerate}
Therefore 
$\Gamma''$ is separable at label $k$.
It is clear that $\Gamma''$ and $\Gamma$ are same C-type.

Conversely if $\Gamma$ is separable at label $k$,
then 
the chart $\Gamma$ is clearly zero at label $k$.
Thus we have done.
\hfill $\square$

\begin{lemma}
\label{N1NP}
{\rm (\cite[Lemma 6.1, Proposition 6.6 and Proposition 6.11]{ChartAppII})}
Let $\Gamma$ be a minimal chart of type $(n_1,n_2,\cdots,n_p)$.
Then we have the following:
\begin{enumerate}
\item[{\rm (a)}] $n_1>1$ and $n_p>1$.
\item[{\rm (b)}] If $n_1=2$ $($resp. $n_p=2)$, then $n_2>1$ $($resp. $n_{p-1}>1)$.
\item[{\rm (c)}] If $n_1=3$ $($resp. $n_p=3)$, then $n_2>1$ $($resp. $n_{p-1}>1)$.
\end{enumerate}
\end{lemma}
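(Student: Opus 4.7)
The plan is to prove each part by contradiction: assume the stated inequality fails, and then use the local graph structure together with C-moves to reduce the number of white vertices, contradicting minimality of $\Gamma$. The three parts grow in difficulty, with (a) concerning the structure of a single subgraph $\Gamma_m$, while (b) and (c) require analyzing how two adjacent subgraphs $\Gamma_m$ and $\Gamma_{m+1}$ interact.

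For part (a), suppose $n_1 = 1$. By the definition of type, $\Gamma_{m-1}$ contains no white vertices, so every white vertex of $\Gamma_m$ lies in $\Gamma_m \cap \Gamma_{m+1}$; hence $\Gamma_m$ contains exactly one white vertex $w$. Three edges of label $m$ emanate from $w$ (alternating with three edges of label $m+1$ by the degree-six condition). Because $w$ is the only white vertex in $\Gamma_m$, each such label-$m$ edge either terminates at a black vertex of $\Gamma$ or loops back to $w$. A case analysis on these possibilities (how many of the three edges are loops, how the endpoints of the non-loop edges are arranged), combined with the observation that any label-$m$ edge directly joining $w$ to a black vertex admits a sequence of C-I-M2 and C-I-R2 moves culminating in a move that cancels $w$ together with a neighboring black vertex, will yield a chart with fewer white vertices, contradicting minimality. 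The symmetric argument handles $n_p = 1$ by the same reasoning applied to $\Gamma_{m+p}$.

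For part (b), suppose $n_1 = 2$ and $n_2 = 1$. Then $\Gamma_m$ contains exactly two white vertices $w_1, w_2$, and $\Gamma_{m+1}$ contains exactly three white vertices: $w_1, w_2,$ and one further vertex $w_3 \in \Gamma_{m+1} \cap \Gamma_{m+2}$. At each $w_i$ the three edges of label $m+1$ partition the incidences, and I would track the possible ways the six label-$m+1$ half-edges at $w_1$ and $w_2$ can pair up with each other, with the three half-edges at $w_3$, or with black vertices. The constraint that no other white vertex of label $m+1$ is available should force one of a small number of reducible configurations, each of which simplifies under C-moves to produce a chart with strictly fewer white vertices. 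Part (c), where $n_1 = 3$ and $n_2 = 1$, proceeds along the same lines, with three white vertices in $\Gamma_m$ generating nine label-$m+1$ half-edges whose combinatorics, together with the single white vertex in $\Gamma_{m+1} \cap \Gamma_{m+2}$, again force a reducible pattern.

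The main obstacle in all three parts is the case analysis: for each way the label-$m$ and label-$m+1$ edges can interconnect, one must either exhibit an explicit C-move that decreases $w(\Gamma)$, or rule the configuration out using the orientation and labeling conditions in the definition of a chart. The sheer number of configurations, and the need to cover each exhaustively, is presumably why the cited paper devotes three distinct results (Lemma 6.1, Proposition 6.6, Proposition 6.11) to these claims, and I would expect a self-contained proof here to follow the same organizational split.
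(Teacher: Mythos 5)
This lemma is not proved in the paper at all: it is imported as a black box from \cite[Lemma 6.1, Proposition 6.6, Proposition 6.11]{ChartAppII}, so there is no in-paper argument to compare against. Judged on its own, your text is a strategy outline rather than a proof, and the gap is precisely where all the work lies. In each part you reduce to ``a case analysis \dots will yield a chart with fewer white vertices'' or ``should force one of a small number of reducible configurations,'' but none of the cases are enumerated and none of the reductions are exhibited. The one concrete mechanism you do name --- that a label-$m$ edge joining the white vertex $w$ to a black vertex ``admits a sequence of C-I-M2 and C-I-R2 moves culminating in a move that cancels $w$ together with a neighboring black vertex'' --- is not justified and is false as a general principle: C-I-M2 and C-I-R2 moves do not cancel a white vertex against a black vertex, and a white vertex incident to a terminal edge cannot in general be removed without first establishing global structural facts about the chart (this is exactly why the cited source needs an entire section of machinery, not a local picture, to prove these three statements).

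A correct self-contained proof would have to develop the supporting structure results from \cite{ChartAppII} (or reprove them): the nonexistence of certain terminal/free edge configurations in minimal charts, the analysis of the disks bounded by edges of $\Gamma_m$ when $\Gamma_{m-1}$ carries no white vertices, and only then the counting arguments that rule out $n_1=1$, and rule out $n_2=1$ when $n_1\in\{2,3\}$. Your sketch correctly identifies the shape of the argument (contradiction with minimality via a white-vertex count) and correctly deduces from the definition of type that $\Gamma_m$ contains exactly $n_1$ white vertices, but as written it does not establish parts (a), (b), or (c). Since the paper itself treats this lemma as a citation, the honest options are either to cite it as the authors do or to supply the full case analysis; the present text does neither.
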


{\it Proof of Corollary~\ref{MinimalChart8Zero}.}
Let $\Gamma$ be a minimal chart with $w(\Gamma)=8$.
Suppose that $\Gamma$ is zero at label $k$. 
Then by Theorem~\ref{MainTheorem} 
there exists a chart $\Gamma'$ with the same C-type of $\Gamma$
which is separable at label $k$.
Here $w(\Gamma')=w(\Gamma)$.
Hence there exist
two subcharts $\Gamma^*$, $\Gamma^{**}$ 
such that 
\begin{enumerate}
\item[(1)] $\Gamma'$ is the product of $\Gamma^*$ and $\Gamma^{**}$,
\item[(2)] $w(\Gamma^*)\not=0$ and 
$w(\Gamma^{**})\not=0$.
\end{enumerate}
Then we have
\begin{enumerate}
\item[(3)]
$w(\Gamma^*)+w(\Gamma^{**})=w(\Gamma')=8$.
\end{enumerate}
If either $\Gamma^*$ or $\Gamma^{**}$ is not minimal,
then $\Gamma'$ is C-move equivalent to a chart $\Gamma''$ with $w(\Gamma'')<w(\Gamma')=w(\Gamma)$.
Namely the chart $\Gamma$ is 
C-move equivalent to $\Gamma''$.
This contradicts the fact that $\Gamma$ is minimal.
Hence the two charts $\Gamma^*$ and $\Gamma^{**}$ are minimal.
Since there does not exist a minimal chart 
with at most three white vertices,
we have $w(\Gamma^*)\ge4$ and $w(\Gamma^{**})\ge4$.
Thus by (3),
we have $w(\Gamma^*)=4$ and $w(\Gamma^{**})=4$.
By Lemma~\ref{N1NP}(a),
each of $\Gamma^*$ and $\Gamma^{**}$
is of type $(4)$ or $(2,2)$.
Therefore we complete the proof of Corollary~\ref{MinimalChart8Zero}.
\hfill $\square$\\

{\it Proof of Corollary~\ref{MinimalChart8}.}
 Let $\Gamma$ be a minimal $n$-chart with $w(\Gamma)=8$ of type $(n_1,n_2,\cdots,n_p)$
 such that $\Gamma$ is not zero at any label.
 Then 
\begin{enumerate}
\item[(1)] $n_1+n_2+\cdots+n_p=w(\Gamma)=8$,
\item[(2)] $n_i\ge1$ for each $i$ ($1\le i\le p$).
\end{enumerate}
By Lemma~\ref{N1NP}(a),
we have $n_1\ge2$ and $n_p\ge2$.
If necessary
we change all the edges of label $k$ to 
ones of label $n-k$
for each $k=1,2,\cdots,n-1$
simultaneously,
we can assume that
\begin{enumerate}
\item[(3)] $n_1\ge n_p\ge2$.
\end{enumerate}
There are four cases:
(i) $p=1$ or $2$,
(ii) $p=3$,
(iii) $p=4$,
(iv) $p\ge5$.

{\bf Case (i).}
If $p=1$,
then $\Gamma$ is of type $(8)$.
If $p=2$,
then by (1) and (3)
we have that the chart $\Gamma$ is of type 
$(6,2)$, $(5,3)$ or $(4,4)$.

{\bf Case (ii).}
Suppose $p=3$.
If $n_3=2$ or $3$,
then $n_2=n_{p-1}\ge2$ by Lemma~\ref{N1NP}(b) and (c).
Thus by (1) and (3)
we have that the chart $\Gamma$ is of type 
$(4,2,2)$, $(3,3,2)$, $(2,4,2)$ or $(3,2,3)$.

If $n_3\ge4$,
then by (2) and (3)
we have that $w(\Gamma)=n_1+n_2+n_3\ge 4+1+4=9$.
This contradicts the fact $w(\Gamma)=8$.

{\bf Case (iii).}
Suppose $p=4$.
If $n_4=2$ or $3$,
then $n_3=n_{p-1}\ge2$ by Lemma~\ref{N1NP}(b) and (c).
Thus by (1), (2) and (3)
we have that the chart $\Gamma$ is of type 
$(2,2,2,2)$, $(2,1,3,2)$ or $(3,1,2,2)$.
However
if $n_1=2$ or $3$, then $n_2\ge2$ by Lemma~\ref{N1NP}(b) and (c).
Hence $\Gamma$ is of type 
$(2,2,2,2)$.

If $n_4\ge4$,
then by (2) and (3)
we have that $w(\Gamma)=n_1+n_2+n_3+n_4\ge 4+1+1+4=10$.
This contradicts the fact $w(\Gamma)=8$.

{\bf Case (iv).}
Suppose $p\ge5$. 
There are two cases:
(iv-1) $n_p=2$ or $3$,
(iv-2) $n_p\ge4$.

{\bf Cases (iv-1).}
We have $n_{p-1}\ge2$ by Lemma~\ref{N1NP}(b) and (c).
Thus 
by (2), (3)
we have that
$w(\Gamma)=n_1+n_2+\cdots+n_{p-1}+n_p
\ge n_1+n_2+n_3+n_{p-1}+n_p \ge
n_1+n_2+1+2+2=n_1+n_2+5$.
Hence
$$w(\Gamma)\ge n_1+n_2+5.$$

If $n_1=2$,
then $n_{2}\ge2$ by Lemma~\ref{N1NP}(b).
Thus $w(\Gamma)\ge 2+2+5=9$.
This contradicts the fact $w(\Gamma)=9$.

If $n_1\ge3$,
then by (2)
we have $w(\Gamma)\ge 3+1+5=9$.
This contradicts the fact $w(\Gamma)=9$.

{\bf Cases (iv-2).}
Since $n_p\ge4$,
by (2) and (3)
we have that
$w(\Gamma)=n_1+n_2+\cdots+n_{p-1}+n_p
\ge n_1+n_2+n_3+n_{p-1}+n_p \ge
4+1+1+1+4=11$.
This contradicts the fact $w(\Gamma)=9$.
\hfill $\square$\\


A chart $\Gamma$ belongs to {\it the first class} provided that
\begin{enumerate}
\item[(i)] $w(\Gamma)$ is odd, and
\item[(ii)] there does not exist a minimal chart $\Gamma'$ such that 
$w(\Gamma')$ is odd and less than $w(\Gamma)$.
\end{enumerate}


\begin{corollary}
\label{Corollary2}
If a minimal chart belongs to the first class,
then the chart is not zero at any label.
Namely 
if the type of the chart is $(m;n_1,n_2,\cdots ,n_p)$, 
then for each $i=1,2,\cdots,p$, we have $n_i\neq 0$.
\end{corollary}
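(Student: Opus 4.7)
The plan is to argue by contradiction using Theorem~\ref{MainTheorem}. Suppose $\Gamma$ is a minimal chart in the first class and, for contradiction, assume $\Gamma$ is zero at some label $k$. Then Theorem~\ref{MainTheorem} supplies a chart $\Gamma'$ of the same C-type as $\Gamma$ which is separable at label $k$, so $\Gamma'$ splits as the product of two subcharts $\Gamma^*$ and $\Gamma^{**}$ in disjoint disks, each carrying at least one white vertex. Since $\Gamma$ and $\Gamma'$ have the same type, and the type determines the total white-vertex count $n_1 + \cdots + n_p$, we have
\[
w(\Gamma^*) + w(\Gamma^{**}) \;=\; w(\Gamma') \;=\; w(\Gamma).
\]

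The next step is to observe that both $\Gamma^*$ and $\Gamma^{**}$ must themselves be minimal: if, say, $\Gamma^*$ admitted a C-move reduction, it could be carried out entirely within its supporting disk without disturbing $\Gamma^{**}$, producing a chart C-move equivalent to $\Gamma$ with strictly fewer white vertices, contradicting minimality of $\Gamma$. The crucial step is then a parity argument. Because $w(\Gamma)$ is odd and is the sum of the two positive integers $w(\Gamma^*)$ and $w(\Gamma^{**})$, exactly one of the summands is odd; without loss of generality say $w(\Gamma^*)$. Then $\Gamma^*$ is a minimal chart whose odd white-vertex count is strictly less than $w(\Gamma)$ (since $w(\Gamma^{**}) \geq 1$), contradicting Condition~(ii) in the definition of the first class. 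This shows that $\Gamma$ cannot be zero at any label.

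Finally, for the ``Namely'' clause it remains to translate ``not zero at any label'' into the statement $n_i \neq 0$ for every $i$. The end cases $n_1, n_p \geq 1$ are built into Condition~(iii) of the definition of type, so the only content is for indices $1 < i < p$. I would verify directly that if $n_i = 0$ for such an $i$, then $\Gamma$ is zero at label $k = m+i-1$: the intersection $\Gamma_{m+i-1} \cap \Gamma_{m+i}$ consists solely of white vertices carrying both labels $m+i-1$ and $m+i$ (crossings between consecutive labels are forbidden by Condition~(iv) of a chart, while edges and black vertices carry a single label), hence is empty when $n_i = 0$; meanwhile Condition~(iii) of type provides white vertices in $\Gamma_m$ and $\Gamma_{m+p}$, which straddle $k$. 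The main obstacle I anticipate is conceptual rather than technical: one must be careful that the minimality of $\Gamma$ correctly transmits to minimality of each factor chart via the disjoint-disk support, and that ``same C-type'' is strong enough to preserve the total white-vertex count; once these are in place, the parity step forces the desired contradiction.
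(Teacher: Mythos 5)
Your proposal is correct and follows essentially the same route as the paper: apply Theorem~\ref{MainTheorem} to get a separable chart of the same C-type, use additivity of $w$ over the product together with the parity of $w(\Gamma)$, and play the first-class condition off against minimality (the paper merely permutes the order, deducing non-minimality of one factor from the first-class property and then contradicting minimality of $\Gamma$, whereas you first establish minimality of both factors and then contradict the first-class property). Your explicit verification of the ``Namely'' clause, that $n_i=0$ forces $\Gamma_{m+i-1}\cap\Gamma_{m+i}=\emptyset$ because consecutive labels cannot meet at crossings, edges, or black vertices, is a correct addition that the paper leaves implicit.
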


\begin{proof}
Let 
$\Gamma$ be a minimal chart belonging to the first class.
Suppose that the chart is zero at label $k$.
Then by Theorem~\ref{MainTheorem} 
there exists a chart $\Gamma'$ with the same C-type of $\Gamma$
which is separable at label $k$.
Thus there exist
subcharts $\Gamma^*$ and $\Gamma^{**}$ 
such that
\begin{enumerate}
\item[(1)] $\Gamma'$ is the product of the two charts $\Gamma^{*}$ and $\Gamma^{**}$,
\item[(2)] $w(\Gamma^{*})\neq 0$ and
$w(\Gamma^{**})\neq 0$.
\end{enumerate}
Then we have
\begin{enumerate}
\item[(3)]
$w(\Gamma^*)+w(\Gamma^{**})=w(\Gamma')$.
\end{enumerate}
Since $\Gamma'$ and $\Gamma$ are same C-type, 
we have $w(\Gamma')=w(\Gamma)$. 
Since $\Gamma$ belongs to the first class, 
we have that $w(\Gamma')(=w(\Gamma))$ is odd. 
Thus $w(\Gamma^*)$ or $w(\Gamma^{**})$ is odd
less than $w(\Gamma)$.
Since $\Gamma$ belongs to the first class, 
either $\Gamma^*$ or $\Gamma^{**}$ is not 
a minimal chart.
Hence $\Gamma'$ is C-move equivalent to a chart $\Gamma''$
with $w(\Gamma'')<w(\Gamma')=w(\Gamma)$.
Namely the chart $\Gamma$ is 
C-move equivalent to $\Gamma''$. 
This contradicts the fact that $\Gamma$ is minimal. 
Therefore $\Gamma$ is not zero at any label.
\end{proof}

{\it Proof of Corollary~\ref{Corollary1}.} 
There does not exist a minimal chart $\Gamma$ 
with $w(\Gamma)=1,2,$ or $3$.
Further there does not exist a minimal chart $\Gamma$ 
with $w(\Gamma)=5$ (\cite{ONS}).
Furthermore there does not exist a minimal chart $\Gamma$ 
with $w(\Gamma)=7$ 
(\cite{ChartAppl}, \cite{ChartAppII}, \cite{ChartAppIII}, \cite{ChartAppIV}, \cite{ChartAppV}).
Hence any chart with nine white vertices 
belongs to the first class.
Thus any minimal chart 
with nine white vertices
is not zero at any label 
by Corollary~\ref{Corollary2}.

Let $\Gamma$ be a minimal chart with $w(\Gamma)=11$.
Suppose that $\Gamma$ is zero at label $k$. 
Then by Theorem~\ref{MainTheorem} 
there exists a chart $\Gamma'$ with the same C-type of $\Gamma$
which is separable at label $k$.
Here $w(\Gamma')=w(\Gamma)$.
Hence there exist
two subcharts $\Gamma^*$, $\Gamma^{**}$ such that
\begin{enumerate}
\item[(1)] $\Gamma'$ is the product of the two charts $\Gamma^{*}$ and $\Gamma^{**}$,
\item[(2)] $w(\Gamma^{*})\neq 0$ and 
$w(\Gamma^{**})\neq 0$.
\end{enumerate}
Then we have
\begin{enumerate}
\item[(3)]
$w(\Gamma^*)+w(\Gamma^{**})=w(\Gamma')=11$.
\end{enumerate}
There are five cases:
\begin{enumerate}
\item[(i)] 
one of $w(\Gamma^*)$ and $w(\Gamma^{**})$ equals ${\bf 1}$ 
and the other equals 10,
\item[(ii)] 
one of $w(\Gamma^*)$ and $w(\Gamma^{**})$ equals ${\bf 2}$ 
and the other equals 9,
\item[(iii)] 
one of $w(\Gamma^*)$ and $w(\Gamma^{**})$ equals ${\bf 3}$ 
and the other equals 8,
\item[(iv)] 
one of $w(\Gamma^*)$ and $w(\Gamma^{**})$ equals $4$ 
and the other equals {$\bf 7$}, and
\item[(v)] 
one of $w(\Gamma^*)$ and $w(\Gamma^{**})$ equals ${\bf 5}$ 
the other equals 6.
\end{enumerate}
But in any case, 
either $\Gamma^*$ or $\Gamma^{**}$ 
is not minimal.
Hence $\Gamma'$ is C-move equivalent to a chart $\Gamma''$
with $w(\Gamma'')<w(\Gamma')=w(\Gamma)$.
Namely the chart $\Gamma$ is 
C-move equivalent to $\Gamma''$.
This contradicts the fact that $\Gamma$ is minimal. 
Therefore any minimal chart $\Gamma$ 
with $w(\Gamma)=11$ is 
not zero at any label.
\hfill $\square$\\

{\it Proof of Corollary~\ref{MinimalChart9}.}
 Let $\Gamma$ be a minimal $n$-chart with $w(\Gamma)=9$ of type $(n_1,n_2,\cdots,n_p)$.
 Then 
\begin{enumerate}
\item[(1)] $n_1+n_2+\cdots+n_p=w(\Gamma)=9$.
\end{enumerate}
By Corollary~\ref{Corollary1},
we have 
\begin{enumerate}
\item[(2)] $n_i\ge1$ for each $i$ ($1\le i\le p$).
\end{enumerate}
By Lemma~\ref{N1NP}(a),
we have $n_1\ge2$ and $n_p\ge2$.
If necessary
we change all the edges of label $k$ to 
ones of label $n-k$
for each $k=1,2,\cdots,n-1$
simultaneously,
we can assume that
\begin{enumerate}
\item[(3)] $n_1\ge n_p\ge2$, and
\item[(4)] if $n_1=n_p$ and $p\ge4$, then $n_2\ge n_{p-1}$.
\end{enumerate}

If $p=1$,
then $\Gamma$ is of type $(9)$.
If $p=2$, then 
by (1) and (3) the chart $\Gamma$ is of type 
$(7,2)$, $(6,3)$ or $(5,4)$.

Suppose $p\ge3$.
There are two cases:
(i) $n_p=2$ or $3$, (ii) $n_p\ge4$.

{\bf Case (i).}
By Lemma~\ref{N1NP}(b) and (c),
we have $n_{p-1}\ge2$.
There are four cases:
(i-1) $p=3$,
(i-2) $p=4$,
(i-3) $p=5$,
(i-4) $p\ge6$.

{\bf Case (i-1).} 
Suppose $p=3$. 
By (1), (3) and $n_2=n_{p-1}\ge2$,
the chart $\Gamma$ is of type 
$(5,2,2)$, $(4,3,2)$, $(3,4,2)$, $(2,5,2)$, 
$(4,2,3)$ or $(3,3,3)$.

{\bf Case (i-2).} 
Suppose $p=4$. 
If $n_1=2$ or $3$,
then $n_2\ge2$ by Lemma~\ref{N1NP}(b) and (c).
Thus by (1), (3), (4) and $n_3=n_{p-1}\ge2$,
the chart $\Gamma$ is of type 
$(2,3,2,2)$ or $(3,2,2,2)$.

If $n_1=4$,
by (1), (2), (3) and $n_3=n_{p-1}\ge2$,
then $\Gamma$ is of type $(4,1,2,2)$

If $n_1\ge5$,
then by (2), (3) and $n_3=n_{p-1}\ge2$,
we have $w(\Gamma)=n_1+n_2+n_3+n_4\ge5+1+2+2=10$.
This contradicts the fact $w(\Gamma)=9$.

{\bf Case (i-3).} 
Suppose $p=5$. 
If $n_1=2$,
then $n_2\ge2$ by Lemma~\ref{N1NP}(b).
Thus by (1), (2), (3) and $n_4=n_{p-1}\ge2$,
we have that the chart $\Gamma$ is of type 
$(2,2,1,2,2)$.

If $n_1=3$,
then $n_2\ge2$ by Lemma~\ref{N1NP}(c).
Thus by (2), (3) and $n_4=n_{p-1}\ge2$
we have 
$w(\Gamma)=n_1+n_2+n_3+n_4+n_5\ge3+2+1+2+2=10$.
This contradicts the fact $w(\Gamma)=9$.

If $n_1\ge4$,
then by (2), (3) and $n_4=n_{p-1}\ge2$
we have
$w(\Gamma)=n_1+n_2+n_3+n_4+n_5\ge4+1+1+2+2=10$.
This contradicts the fact $w(\Gamma)=9$.

{\bf Case (i-4).} 
Suppose $p\ge6$. 
If $n_1=2$ or $3$,
then $n_2\ge2$ by Lemma~\ref{N1NP}(b) and (c).
Thus 
by (2), (3) and $n_{p-1}\ge2$,
we have
$w(\Gamma)=n_1+n_2+\cdots+n_{p-1}+n_p
\ge n_1+n_2+n_3+n_4+n_{p-1}+n_p \ge
2+2+1+1+2+2=10$.
This contradicts the fact $w(\Gamma)=9$.

If $n_1\ge4$,
then 
by (2), (3) and $n_{p-1}\ge2$
we have
$w(\Gamma)=n_1+n_2+\cdots+n_{p-1}+n_p
\ge n_1+n_2+n_3+n_4+n_{p-1}+n_p \ge
4+1+1+1+2+2=11$.
This contradicts the fact $w(\Gamma)=9$.

{\bf Case (ii).}
If $p=3$,
then by (1), (2) and (3)
we have $\Gamma$ is of type $(4,1,4)$.

If $p\ge4$,
then
by (2) and (3)
we have
$w(\Gamma)=n_1+n_2+\cdots+n_{p-1}+n_p
\ge n_1+n_2+n_3+n_p \ge
4+1+1+4=10$.
This contradicts the fact $w(\Gamma)=9$.
\hfill $\square$




\vspace{5mm}

\begin{minipage}{65mm}
{Teruo NAGASE
\\
{\small Tokai University \\
4-1-1 Kitakaname, Hiratuka \\
Kanagawa, 259-1292 Japan\\
\\
nagase@keyaki.cc.u-tokai.ac.jp
}}
\end{minipage}
\begin{minipage}{65mm}
{Akiko SHIMA 
\\
{\small Department of Mathematics, 
\\
Tokai University
\\
4-1-1 Kitakaname, Hiratuka \\
Kanagawa, 259-1292 Japan\\
shima@keyaki.cc.u-tokai.ac.jp
}}
\end{minipage}

\end{document}